\documentclass[12pt]{amsart}
\usepackage{hyperref}
\usepackage{amsfonts}
\usepackage{amsmath}
\usepackage{amssymb}
\usepackage{amscd}
\usepackage{graphicx}
\usepackage{latexsym}
\usepackage{color}
\usepackage{epsfig}
\usepackage{amsopn}
\usepackage{mathrsfs}
\usepackage{array}
\usepackage{soul}

\sloppy

\setlength{\oddsidemargin}{0cm} \setlength{\evensidemargin}{0cm}
\setlength{\textwidth}{6.55in}

\setlength{\topmargin}{-.4in}
\setlength{\textheight}{9.0in} \setlength{\headheight}{.2in} 
\setlength{\headsep}{.2in} \setlength{\baselineskip}{0pt}
\setlength{\parskip}{.5mm}

\usepackage{booktabs}
\usepackage{longtable}
\setlength{\LTpost}{-15pt}

\theoremstyle{plain}
\newtheorem{lemma}{Lemma}[section]
\newtheorem*{theorem*}{Theorem}
\newtheorem*{lemma*}{Lemma}
\newtheorem*{proposition*}{Proposition}
\newtheorem*{conjecture*}{Conjecture}
\newtheorem*{corollary*}{Corollary}
\newtheorem*{problem*}{Problem}
\newtheorem{theorem}[lemma]{Theorem}
\newtheorem{conjecture}[lemma]{Conjecture}

\newtheorem{proposition}[lemma]{Proposition}

\newtheorem{question}[lemma]{Question}

\theoremstyle{definition}
\newtheorem{definition}[lemma]{Definition}
\newtheorem{example}[lemma]{Example}
\newtheorem{remark}[lemma]{Remark}

\newcommand{\CC}{\mathbb{C}}
\newcommand{\QQ}{\mathbb{Q}}
\newcommand{\RR}{\mathbb{R}}

\newcommand{\OO}{\mathcal{O}}

\newcommand{\cO}{\mathcal{O}}

\renewcommand{\P}{\mathbb{P}}
\newcommand{\cL}{\mathcal{L}}

\DeclareMathOperator{\Bl}{Bl}

\DeclareMathOperator{\edim}{edim}

\DeclareMathOperator{\mult}{mult}

\newcommand{\leqor}{\underset{{\scriptscriptstyle (}-{\scriptscriptstyle )}}{<}}

\begin{document}

\date{\today}
\author[{\L}. Farnik]{{\L}ucja Farnik}
\address{Department of Mathematics, Pedagogical University of Cracow,
   Podchor\c a\.zych 2,
   PL-30-084 Krak\'ow, Poland}
\email{Lucja.Farnik@gmail.com}

\author[K. Hanumanthu]{Krishna Hanumanthu}
\address{Chennai Mathematical Institute, H1 SIPCOT IT Park, Siruseri, Kelambakkam 603103, India}
\email{krishna@cmi.ac.in}

\author[J. Huizenga]{Jack Huizenga}
\address{Department of Mathematics, The Pennsylvania State University, University Park, PA 16802}
\email{huizenga@psu.edu}

\author[D. Schmitz]{David Schmitz}
\address{Mathematisches Institut,
   Universit\" at Bayreuth,
  D-95440 Bayreuth}
\email{schmitzd@mathematik.uni-marburg.de}

\author[T. Szemberg]{Tomasz Szemberg}
\address{Department of Mathematics, Pedagogical University of Cracow,
   Podchor\c a\.zych 2,
   PL-30-084 Krak\'ow, Poland}
\email{tomasz.szemberg@gmail.com}

\subjclass[2010]{Primary: 14C20. Secondary: 14H50, 14J26}
\keywords{}
\thanks{{\L}F was partially supported by the National Science Centre,
        Poland, grant 2018/28/C/ST1/00339.
        KH was partially supported by a grant from Infosys
        Foundation and by DST SERB MATRICS grant MTR/2017/000243.
        JH was partially supported by the  NSA\ Young Investigator Grant H98230-16-1-0306 and NSF FRG grant DMS 1664303.
        DS was partially supported by DFG grant PE 305/13-1.
        TS was partially supported by the National Science Centre Poland, grant 2018/30/M/ST1/00148}

\title{Rationality of Seshadri constants on general blow ups of $\P^2$}

\begin{abstract}
Let $X$ be a projective surface and let $L$ be an ample line bundle on
$X$. The global Seshadri constant $\varepsilon(L)$ of $L$ is defined
as the infimum of Seshadri constants $\varepsilon(L,x)$ as $x\in X$ varies.
It is an interesting question to ask if $\varepsilon(L)$ is a rational
number for any pair $(X, L)$. We study this question when
$X$ is a blow up of $\P^2$ at $r \ge 0$ very general points and $L$ is an
ample line bundle on $X$.
For each $r$ we define a \textit{submaximality threshold} which
governs the rationality or irrationality of $\varepsilon(L)$.
We state a conjecture which strengthens the SHGH Conjecture and
assuming that this conjecture is true we determine the submaximality threshold.

\end{abstract}

\maketitle

\section{Introduction}

Let $X$ be a smooth complex projective variety and let $L$ be a nef line bundle on
$X$.
The {\it Seshadri constant} of $L$ at $x \in X$ is defined as the real number
$$\varepsilon(X,L,x):=  \inf\limits_{\substack{x \in C}} \frac{L\cdot
  C}{{\rm mult}_{x}C},$$ where the infimum is taken over all
irreducible and reduced curves passing through $x$.
The Seshadri constants were defined by Demailly in \cite{Dem},
motivated by  the Seshadri
criterion for ampleness (\cite[Theorem 7.1]{Har70})
which says that $L$ is ample if and only if $\varepsilon(X,L,x) > 0$
for all $x \in X$.

Seshadri constants have turned out to be fundamental to the study of
positivity questions in algebraic geometry and a lot of research is
currently focused on problems related to Seshadri constants. One such
open problem is whether Seshadri constants can be irrational.

Assume that $X$ is a surface. If $L$ is an ample line bundle on $X$,
then for any $x \in X$, we have $0 < \varepsilon(X, L,x)
\le \sqrt{L^2}$. The first inequality is the Seshadri criterion for
ampleness and the second inequality is an easy observation. The largest and the smallest
values of Seshadri constants as the point $x$ varies are interesting
and generally they behave very differently.

To be more precise, one has the following two definitions:
\begin{align*}\varepsilon(X,L,1) &: = \sup\limits_{x\in X}
\varepsilon(X,L,x),\\
\varepsilon(X,L) &:= \inf\limits_{x \in X}  \varepsilon(X,L,x).
\end{align*}

It is known that $\varepsilon(X,L,1)=\varepsilon(X,L,x)$ for very general points $x\in X$ (see
\cite{Ogu}).  It is also expected
that  $\varepsilon(X,L,1) = \sqrt{L^2}$ in many situations.
For example, let $X$ be the blow up of $\P^2$ at at least 9 general
points. If  some well-known conjectures
are true, then there exist ample line bundles on $X$ such that
$\varepsilon(X,L,1) = \sqrt{L^2} \notin \QQ$.
 See \cite{DKMS,HH} for more details.

On the other
hand, $\varepsilon(X,L)$, called the global Seshadri constant, is usually attained at special points. In
this context, \cite[Question 1.6]{SS} asks whether $\varepsilon(X,L)$ is always
rational for any pair $(X,L)$. In this paper we study this question in
the case of blow ups of $\P^2$ at very general points.  On the one
hand, it is easy to exhibit ample line bundles $L$ such that
$\varepsilon(X,L)$ is rational.  On the other hand, we state a
strengthened version of the SHGH conjecture that implies that
$\varepsilon(X,L)$ can be irrational for some line bundles $L$ close to the boundary of the ample cone.  See Example
\ref{irrational-epsilon} for one such instance.

In fact, for $\mu\in \QQ$ we study uniform line bundles $L = L(\mu) = \mu H - \sum_i E_i$ on blow ups of $\P^2$ at
very general points and exhibit a threshold $\mu_0$ such that $\varepsilon(X, L)\in \QQ$ if $\mu \geq \mu_0$.  This is proved in Theorem \ref{thm-Rational}. We then state
Conjecture \ref{MainConjecture} which strengthens the SHGH
Conjecture.
Assuming this conjecture is true, we  show in Theorem
\ref{thm-Irrational}  that if $\mu < \mu_0$ then $\varepsilon(X, L) \notin \QQ$ unless $\sqrt{L^2}
\in \QQ$.

We will write $\varepsilon(L) = \varepsilon(X,L)$ when the variety $X$ is clear.

\subsection*{Acknowledgements} We thank
the Mathematisches Forschungsinstitut Oberwolfach for hosting
the Mini-Workshop  {\it Asymptotic
  Invariants of Homogeneous Ideals} during September 30 --
October 6, 2018, where most of this work was done.
The research stay of the se\-cond author was partially supported by the Simons Foundation
and by the Mathematisches Forschungsinstitut Oberwolfach and he is
grateful to them.  We would also like to thank the referee, whose comments helped improve the exposition of the paper.

\section{Sub-maximality threshold}\label{section1}

Let $p_1,\ldots,p_r\in \P^2$ be very general points and let $X =
\Bl_{p_1,\ldots,p_r} \P^2$ be the blowup of $\P^2$ at
$p_1,\ldots,p_r$.   Let $E_i$ be the exceptional divisor over $p_i$, and let $E = \sum_i E_i$.
Let $H$ denote the pull-back of $\cO_{\P^2}(1)$.

We will focus on {\it uniform} line bundles $L =
dH-mE$ on $X$, i.e., such where all exceptional divisors appear with the same multiplicity $m$. We are interested in the rationality or irrationality
of $\varepsilon(L)$. This only depends on the ratio $\mu=d/m$ and we work
with the $\QQ$-divisor $(d/m)H-E$. More generally, for $\mu\in \RR$, let $L(\mu)$ be the
$\RR$-divisor $\mu H -
E$.  If $L(\mu)$ is ample then $\mu > \sqrt{r}$. If $r \geq 10$, then the converse is true if the Nagata conjecture holds.

In this paper, we discuss the following question.

\begin{question}\label{question}
Let $\mu\in\QQ$ and suppose $L(\mu)$ is ample.  Is $\varepsilon (L(\mu))$ rational?
\end{question}

It is well-known that if $L$ is an ample $\QQ$-divisor and $\varepsilon(L,x) < \sqrt{L^2}$ then
$\varepsilon(L,x)$ is achieved by a curve $C$
containing $x$, and consequently, $\varepsilon(L,x) \in \QQ$.
So if $\varepsilon(L)$ is rational, then one of the following must be
true:

\begin{enumerate}\item $\varepsilon(L) = \sqrt{L^2} \in \QQ$, or

\item  $\varepsilon(L) <  \sqrt{L^2}$ and there is a pair $(C,x)$ where
$C$ is an irreducible and reduced curve containing a point $x$ such
that $$\varepsilon(L) =  \frac{L\cdot  C}{{\rm mult}_{x}C}.$$
\end{enumerate}
A curve $C$ satisfying $$\frac{L\cdot  C}{{\rm mult}_{x}C}\leqor \sqrt{L^2}$$  is called a {\it
  (weakly) submaximal curve} for $L$ with respect to $x$ (note that if
equality holds, then $\sqrt{L^2}$ is rational).
In light of this discussion, if $L$ is ample then we have $\varepsilon(L) \in \QQ$ if and only if either $\sqrt{L^2}\in \QQ$ or there is a weakly submaximal curve.

When the number $r$ of points is at most $9$, a complete answer to
Question \ref{question} is given in the following theorem.

\begin{theorem}\label{thm-delPezzo}
Let $r\leq 9$ and let $\mu\in \QQ$ be such that $L(\mu)$ is ample.  Then $\varepsilon(L(\mu)) \in \QQ$.
\end{theorem}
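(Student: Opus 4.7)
The plan is to verify the criterion stated just before the theorem: it suffices to show, for each ample $L = L(\mu)$ with $\mu \in \QQ$ and $r \le 9$, that either $\sqrt{L^2}\in\QQ$ or some irreducible reduced curve $C$ is weakly submaximal at a point $x$, i.e.\ $(L\cdot C)/\mult_x C \le \sqrt{L^2}$. The base case $r=0$ is immediate since $X = \PP^2$ and $\varepsilon(\mu H) = \mu \in \QQ$.

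For $1\le r\le 9$, the strategy is to exhibit an explicit irreducible curve $C$ of low degree which is weakly submaximal at some point on it. The first candidate is any exceptional divisor $E_i$: it is smooth rational with $L\cdot E_i = 1$ and $\mult_x E_i = 1$ for $x\in E_i$, so it is weakly submaximal as soon as $\sqrt{L^2} = \sqrt{\mu^2 - r} \ge 1$, i.e.\ $\mu^2 \ge r+1$. To cover the complementary (typically short) intervals of the ample range where $\mu^2 < r+1$, I would use the classification of $(-1)$-curves on the weak del Pezzo surface $X$ (for $r\le 8$). A $(-1)$-curve of class $aH - \sum b_j E_j$ with $b_j\in\{0,1\}$ satisfies $\sum b_j = a^2+1$, giving $L\cdot C = a\mu - (a^2+1)$, and the weak-submaximality condition $L\cdot C \le \sqrt{L^2}$ squares to an explicit quadratic inequality in $\mu$. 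For example, $a=1$ (strict transforms $H - E_i - E_j$ of lines) yields $\mu \ge 1 + r/4$, and $a=2$ (conics through five points) yields $3\mu^2 - 20\mu + 25 + r \le 0$. Case by case for each $r\in\{1,\ldots,8\}$, one verifies that the union of such submaximality intervals, together with $\{\mu^2 \ge r+1\}$, covers the ample range of $\mu$---the ample range itself being described, by duality, in terms of the same finite list of $(-1)$-curves.

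The case $r = 9$ is the most delicate, since $X$ carries infinitely many $(-1)$-curves yet none is weakly submaximal when $\mu$ is close to the boundary $\mu = 3$ of the ample cone. The crucial additional candidate here is the anticanonical class $-K_X = 3H - E$, which for $9$ very general points is represented by a smooth irreducible elliptic curve (a generic fiber of the anticanonical elliptic fibration). It satisfies $(-K_X)^2 = 0$ and $L\cdot(-K_X) = 3\mu - 9$, and $3\mu - 9 \le \sqrt{\mu^2 - 9}$ squares to $8\mu^2 - 54\mu + 90 \le 0$, with solution set $[3,\,15/4]$. Combined with the $E_i$ (weakly submaximal for $\mu\ge \sqrt{10}$) and $H - E_i - E_j$ (for $\mu \ge 13/4$), this covers the full ample range $\mu > 3$. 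The principal obstacle throughout is precisely this coverage verification: for each $r$, one must check that the finitely many explicit submaximality intervals coming from low-degree $(-1)$-curves (plus $-K_X$ when $r=9$) exhaust the ample range of $\mu$.
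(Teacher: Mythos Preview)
Your approach is the same as the paper's: exhibit explicit weakly submaximal curves covering the ample range of $\mu$. For $r=9$ your argument is identical to the paper's (the anticanonical cubic $3H-E$ on $(3,15/4]$, the exceptional divisor on $[\sqrt{10},\infty)$).

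There is, however, one concrete gap in the $r\le 8$ discussion. You propose to use $(-1)$-curves $aH-\sum b_jE_j$ with all $b_j\in\{0,1\}$, but the conditions $C^2=-1$ and $-K_X\cdot C=1$ then force $a^2+1=\sum b_j=3a-1$, so only $a\in\{1,2\}$ occur. For $r=8$ the uniform ample range is $\mu>17/6$, while the exceptional divisors, lines $H-E_i-E_j$, and conics $2H-\sum_{k=1}^5 E_{i_k}$ are each weakly submaximal only for $\mu\ge 3$ (your own formulas give this). The interval $(17/6,3)$ is therefore not covered by your list, and the claimed case-by-case verification fails there.

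The fix is simply to allow $(-1)$-curves with higher multiplicities, which do exist on the degree-$1$ del Pezzo. For instance the quintic $5H-2\sum_{i=1}^6 E_i-E_7-E_8$ is weakly submaximal exactly on $[17/6,3]$, or, as the paper uses, the sextic $6H-3E_1-2\sum_{i=2}^8 E_i$ works on $[99/35,3]\supset(17/6,3]$. Once you enlarge your pool to the full finite list of $(-1)$-curves, the verification goes through and your argument is complete.
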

\begin{proof}
When $r\leq 8$, it is well-known that Seshadri constants of ample line bundles are rational at all points. See e.g. \cite[Remark 4.2]{Sa14}. More directly, it is also easy to exhibit weakly submaximal curves for $r \le 8$. See \cite[Example 2.4]{SS} for more details. For example, let $r=8$. In this case, $L(\mu)$ is ample if and only if $\mu > 17/6$. If $\mu \ge 3$, then an exceptional divisor $E_i$ and a point $x \in E_i$ give a weakly submaximal curve. Indeed, we have $$1 = L(\mu) \cdot E_i \leq \sqrt{L(\mu)^2} = \sqrt{\mu^2-8},$$ whenever $\mu \ge 3$.
For $\mu \in (17/6,3)$, let
$C$ be the sextic $6H-3E_1-2(E_2+ \ldots + E_8)$. This is a weakly submaximal curve for $L(\mu)$ if $6\mu - 17 \le \sqrt{\mu^2-8}$. This holds for $2.828\le \mu \le 3$. It follows that $\varepsilon(L(\mu)) \in \QQ$. Similarly, one can find  submaximal curves for ample bundles $L(\mu)$ when $r \le 8$.

For $r = 9$, the line bundle $L(\mu)$ is ample if and only if $\mu > 3$.  We show that there is a weakly submaximal curve for $L(\mu)$. First, if $\mu \geq \sqrt{10}$ then, as above, an exceptional divisor $E_i$ is a weakly submaximal curve for $L(\mu)$.

If instead $\mu \in (3,\sqrt{10})$, we need to give a different weakly submaximal curve.  Consider the cubic $C = 3H - E$ through the 9 points, and let $x\in C$.  Then $C$ gives a weakly submaximal curve for $L(\mu)$ so long as $$3\mu-9=L(\mu)\cdot C \leq \sqrt{L(\mu)^2} = \sqrt{\mu^2 - 9},$$ and this inequality holds for $\mu \in (3,3.75]$.  Therefore $\varepsilon(L(\mu))\in \QQ$.
\end{proof}

Thus for the rest of the article we focus on the case $r \geq 10$.  We can shift our focus to the existence of weakly submaximal curves.

\begin{question}\label{question2}
For which real $\mu \geq \sqrt{r}$ does $L(\mu)$ admit a weakly submaximal curve?
\end{question}

The answer to Question \ref{question2} is perhaps most interesting when $\mu$ is rational and $L(\mu)$ is ample, but there is no difficulty in stating or studying it more generally as we have done above.  We first prove that there is a critical value $\mu_0 \geq \sqrt r$ such that
 $L$ admits a weakly submaximal curve if  $\mu \geq
\mu_0$. It follows that if $L(\mu)$ is ample and $\mu\in \QQ$ then $\varepsilon(L(\mu)) \in \QQ$ for $\mu \geq \mu_0$.

\begin{definition}
Let $X = \Bl_{p_1,\ldots,p_r} \P^2$ be the blowup of $\P^2$ at $r$ general points.  A real number $\mu_0 \geq \sqrt{r}$ is called the
\textit{submaximality threshold} for $r$ if
\begin{enumerate}
\item $L(\mu)$ does not admit a weakly submaximal curve for $\mu < \mu_0$, and \item $L(\mu)$ does admit a weakly submaximal curve for $\mu \geq \mu_0$.
\end{enumerate}
\end{definition}

In Section \ref{section2}, we prove that submaximality thresholds exist for
$r \ge 10$, assuming a strengthening of the SHGH Conjecture.
This in particular means that if
$\sqrt{r} < \mu < \mu_0$  and $\sqrt{L(\mu)^2} \notin \QQ$, then
$\varepsilon(L(\mu)) \notin \QQ$.
See Conjecture
\ref{MainConjecture} and Theorem \ref{thm-Irrational}.



\begin{theorem}\label{thm-Rational}
Let $r \ge 1$ and let $\mu\in \RR$. Then we have the following.
\begin{enumerate}
\item For any $r$, $L(\mu)$ admits a weakly submaximal curve for all $\mu \geq \sqrt{r+1}$.
\item If $r = 10$, then $L(\mu)$ admits a weakly submaximal curve for
  all $\mu \ge 77/24 \approx 3.208$.
\item If $r = 11$, then $L(\mu)$ admits a weakly submaximal curve for all $\mu \geq 4- \frac{\sqrt{3}}{3} \approx 3.422$.
\item If $r = 13$, then $L(\mu)$ admits a weakly submaximal curve for all
 $\mu \geq \frac{1}{6}(26-\sqrt{13}) \approx 3.732$.
\end{enumerate}
\end{theorem}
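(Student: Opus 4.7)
For each part I will exhibit a specific irreducible and reduced curve $C$ on $X$ together with a suitable point $x\in C$, and verify the inequality
\[
\frac{L(\mu)\cdot C}{\mathrm{mult}_x C}\leq \sqrt{L(\mu)^2}=\sqrt{\mu^2-r}
\]
throughout the claimed range of $\mu$. After squaring, this becomes a quadratic inequality in $\mu$ whose roots bracket the range in which $C$ is weakly submaximal. The curves will be chosen so that, taken together with the exceptional divisors $E_i$ (weakly submaximal for $\mu\geq\sqrt{r+1}$ by part (1)), their intervals of validity tile the stated range.

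\textbf{Part (1).} Take $C=E_i$ and any $x\in E_i$. Then $L(\mu)\cdot E_i=1$ and $\mathrm{mult}_x E_i=1$, so the condition $1\leq\sqrt{\mu^2-r}$ is equivalent to $\mu\geq\sqrt{r+1}$.

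\textbf{Parts (2)--(4).} In each case the exceptional divisor covers $\mu\geq\sqrt{r+1}$, and I propose a single additional curve whose range of validity fills in the remaining interval. The candidates I would try are:
\begin{itemize}
\item For $r=10$: $C_{10}:=23H-7\sum_{i=1}^{10}E_i$ with $x=p_1$. The ratio is $(23\mu-70)/7$, and the squared inequality reduces to $48\mu^2-322\mu+539\leq 0$, whose roots are $77/24$ and $7/2$. Since $7/2>\sqrt{11}$, combining with part (1) covers $[77/24,\infty)$.
\item For $r=11$: $C_{11}:=4H-2E_1-\sum_{i=2}^{11}E_i$ with $x=p_1$. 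The ratio is $2\mu-6$, and the squared inequality reduces to $3\mu^2-24\mu+47\leq 0$, with roots $4\pm\sqrt{3}/3$. Since $4+\sqrt{3}/3>\sqrt{12}$, this combines with part (1) to cover $[4-\sqrt{3}/3,\infty)$.
\item For $r=13$: $C_{13}:=48H-24E_1-11\sum_{i=2}^{13}E_i$ with $x=p_1$. The ratio is $2\mu-13/2$, and the squared inequality reduces to $12\mu^2-104\mu+221\leq 0$, with roots $(26\pm\sqrt{13})/6$. Since $(26+\sqrt{13})/6>\sqrt{14}$, this combines with part (1) to cover $[(26-\sqrt{13})/6,\infty)$.
\end{itemize}
The choice of each curve $C$ is dictated by requiring equality at the claimed threshold: writing $C=dH-aE_1-b\sum_{i\geq 2}E_i$ and equating rational and irrational parts of $(L(\mu)\cdot C)/a=\sqrt{\mu^2-r}$ at $\mu=\mu_0$ yields a pair of integer relations in $(d,a,b)$; the smallest positive solution gives the class.

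\textbf{Main obstacle.} The chief technical challenge is proving the existence of an irreducible and reduced curve in each of the classes $|C_{10}|,|C_{11}|,|C_{13}|$ on the blowup of $\P^2$ at very general points. A naive parameter count gives non-negative expected dimension in every case ($20$ for $C_{10}$, $1$ for $C_{11}$, and $133$ for $C_{13}$), but expected dimension does not suffice: one must show that a very general specialization of the $r$ points still admits an irreducible representative. For $C_{11}$, a plane quartic with a single node at $p_1$ and ten further simple base points can be produced by an elementary dimension count plus a check that reducible or non-reduced members do not form the whole linear system. For $C_{10}$ and $C_{13}$ the situation is more delicate; I would specialize some of the points to a suitable configuration (for example, onto a plane curve of low degree) and show that the class is represented by an explicit (possibly reducible) divisor on the specialization, then use semicontinuity together with a Cremona-reduction analysis to conclude that for a very general configuration the general member of the linear system is irreducible and reduced. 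This step is likely to require either a case of the SHGH Conjecture or a specialized construction tailored to the classes at hand.
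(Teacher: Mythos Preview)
Your proposal contains a fundamental conceptual error in parts (2)--(4). You take $x=p_1$, but $p_1$ is not a point of $X=\Bl_{p_1,\ldots,p_r}\P^2$: it has been replaced by the exceptional divisor $E_1$. The coefficient $m_1$ of $E_1$ in a class $dH-\sum m_iE_i$ records the multiplicity of the \emph{plane} curve at $p_1$; after blowing up, the strict transform is generically smooth along $E_1$ and meets it in $m_1$ points of multiplicity $1$. Thus for a general member $C$ of $|C_{10}|$, $|C_{11}|$, or $|C_{13}|$ there is no point $x\in X$ with $\mathrm{mult}_xC$ equal to $7$, $2$, or $24$ respectively, and your ratios $(L(\mu)\cdot C)/\mathrm{mult}_xC$ are unjustified. (For $C_{10}$, a plane curve of degree $23$ with an \emph{eleventh} point of multiplicity $7$ would have expected dimension $-9$, so one cannot manufacture the needed singularity elsewhere either.) Your algebra is correct, but it computes the wrong quantity.

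The paper's argument is genuinely different and avoids this trap. For each of $r=10,11,13$ it considers a \emph{pencil} on $X$ (of class $10H-4E_1-3\sum_{i\ge2}E_i$, $4H-2E_1-\sum_{i\ge2}E_i$, and $4H-\sum E_i$ respectively) and uses a topological Euler characteristic computation to force the pencil to contain a singular member: if every fibre of the resolved map $Y\to\P^1$ were smooth of genus $g$, then $\chi_{\mathrm{top}}(Y)=2(2-2g)$, contradicting $\chi_{\mathrm{top}}(Y)=3+k$ for the appropriate $k$. This singular member has a double point at some \emph{new} point $x\in X$, and the resulting pair $(C,x)$ with $\mathrm{mult}_xC\ge 2$ gives the submaximal curve. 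For $r=11$ you and the paper use the same class, but the paper's singular point is an acquired node, not $p_1$. For $r=10$ one curve does not suffice: the singular decic covers $\mu\in[77/24,13/4]$, and a cubic through $9$ of the $10$ points covers $\mu\in[13/4,7/2]\supset[13/4,\sqrt{11}]$.
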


Provided the submaximality threshold $\mu_0$ for $r$ exists, Theorem \ref{thm-Rational} can be viewed as giving a lower bound for $\mu_0$.

\begin{proof} (1) As in the proof of Theorem \ref{thm-delPezzo}, an exceptional divisor $E_i$ and a point $x\in E_i$ give the required weakly submaximal curve.

(2) For $r=10$, consider the complete linear system
$$\cL =  |10H - 4E_1 - 3\sum_{i=2}^{10} E_i|.$$
It is known that this system is non-special, since \cite{CM1} proves the SHGH Conjecture for all \textit{quasi-homogeneous} systems of the form $|dH-nE_1-m\sum_{i=2}^r E_i|$, when $m \le 3$. In particular, the linear system $\cL$
is a pencil, since its expected dimension is 1. An equivalent version of the SHGH Conjecture \cite[Conjecture 3.4]{G1} says that only possible fixed curves of a non-special pencil are $(-1)$-curves.
See the next section for a discussion about the various formulations of the SHGH Conjecture.


We claim the pencil $\mathcal{L}$ has a singular member.  Suppose, on the contrary, that all members of the pencil are smooth. We first claim that $\mathcal{L}$ has no fixed curves. This is clear if the generic member of $\mathcal{L}$ is irreducible. Otherwise, every member of the pencil is disconnected, since $\mathcal{L}$ consists only of smooth curves. If $C$ is a fixed curve, then by the observation in the previous paragraph, $C$ is a $(-1)$-curve. Since members of $\mathcal{L}$ are smooth, we have
$\mathcal{L} \cdot C = C^2 < 0$. But this is not possible, since $\mathcal{L}$ is in standard form and hence has non-negative intersection with all $(-1)$-curves (see \cite{HH}).

Resolve the indeterminacy locus of $\phi_\cL:\P^2\dashrightarrow \P^1$ by blowing up $k$ (possibly infinitely near) points to obtain a morphism $Y\to \P^1$.  Note that all members of $\mathcal{L}$ are smooth curves of genus $3 = \binom{9}{2}-\binom{4}{2}-9\binom{3}{2}$. Hence their pull-backs to $Y$ are also smooth of genus 3 and topological Euler characteristic $-4$.   Then $\chi_{\text{top}}(Y) = 2\cdot (-4) = -8$ (see \cite[Theorem 7.17]{EH}), but also $$\chi_{\text{top}}(Y) = \chi_{\text{top}}(\P^2) + k = 3+ k.$$ This contradiction shows that there must be a singular member of the pencil.

Let $C$ be a singular member of this pencil with singularity at $x\in C$. Then $(C,x)$ gives a weakly submaximal curve for $L(\mu)$ if $$\frac{10\mu-31}{2}=\frac{L(\mu)\cdot C}{2} \leq \sqrt{L(\mu)^2} = \sqrt{\mu^2-10},$$ and this inequality holds for $\mu\in \left[\frac{77}{24},\frac{13}{4}\right].$

Since $\frac{13}{4} < \sqrt{11}$, we need to give a different weakly
submaximal curve for $L(\mu)$ when $\mu\in (\frac{13}{4},\sqrt{11})$.
Consider a cubic through 9 of the 10 points, as in the proof of
Theorem \ref{thm-delPezzo} in the $r=9$ case.  This gives a weakly
submaximal curve for $L(\mu)$ if $$3\mu - 9=L(\mu) \cdot C \leq \sqrt{L(\mu)^2} = \sqrt{\mu^2-10},$$ and this inequality holds for $\mu\in \left[\frac{13}{4},\frac{7}{2}\right]$.  Thus, $L(\mu)$ admits a weakly submaximal curve for all $\mu \geq \frac{77}{24}$.

(3) For $r=11$, there is a pencil of curves of class $$4H - 2E_1 -
\sum_{i=2}^{11} E_i.$$ By a similar computation as in the case $r=10$, this pencil contains a singular curve $C$ with a singular point $x\in C$.
The pair $(C,x)$ gives a weakly submaximal curve if $$\frac{4\mu-12}{2} = L(\mu)\cdot C \leq \sqrt{L(\mu)^2} = \sqrt{\mu^2-11},$$ and this inequality holds for $\mu\in [4-\frac{\sqrt{3}}{3},4+\frac{\sqrt{3}}{3}].$ Since $4 + \frac{\sqrt{3}}{3} > \sqrt{12}$, we are done.

(4)  Finally, for $r=13$, there is a pencil of curves of class $$4H -
\sum_{i=1}^{13} E_i.$$  Again as above, the pencil has a singular member $C$ with singularity $x\in C$.  It gives a weakly submaximal curve so long as $\mu\in \left[\frac{1}{6}(26-\sqrt{13}),\frac{1}{6}(26+\sqrt{13})\right]$, and since $\frac{1}{6}(26+\sqrt{13}) > \sqrt{14}$ we are done.
\end{proof}

\section{A generalized SHGH conjecture}

In Theorem \ref{thm-Rational}, we established upper bounds on the
submaximality threshold.
Conversely, to produce lower bounds on the submaximality threshold it is
necessary to show that there are no weakly submaximal curves.  We state a generalization of the SHGH conjecture which would guarantee that such curves cannot exist.

\subsection{The SHGH conjecture} Suppose that we have integers $d \geq 0$ and $m_1,\ldots,m_r\geq 0$. Consider the linear series $$\cL = |dH - m_1E_1 -\cdots -m_rE_r|$$ on a general blowup $X = \Bl_{p_1,\ldots,p_r}\P^2$. The \emph{expected dimension} of the series is defined to be
$$\edim \cL=\max \left\{{d+2\choose 2} - \sum_i {m_i+1\choose 2}-1,-1\right\},$$ and the series is \emph{nonspecial} if $\dim \cL = \edim \cL$.  There are many statements equivalent to the SHGH conjecture, but the following version is relevant for our purposes.

\begin{conjecture}[SHGH]\label{conj-SHGH}
If $\cL$ is special, then every divisor in $\cL$ is nonreduced.
\end{conjecture}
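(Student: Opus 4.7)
The plan is to argue the contrapositive: if $\cL$ contains a reduced divisor, then $\cL$ is nonspecial. The guiding heuristic is that the only known source of speciality on a blowup of $\P^2$ is a fixed $(-1)$-curve appearing in the base locus with multiplicity $\geq 2$, which automatically forces every member of $\cL$ to be nonreduced. Since this is the (open) SHGH conjecture, what follows is a strategy rather than a complete argument.

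The first step is reduction to standard form via quadratic Cremona transformations. Order the multiplicities so that $m_1 \geq \cdots \geq m_r$. Whenever $k := m_1 + m_2 + m_3 - d > 0$, apply the Cremona centered at $p_1, p_2, p_3$ to replace $\cL$ by $|(d-k)H - (m_1-k)E_1 - (m_2-k)E_2 - (m_3-k)E_3 - \sum_{i\geq 4} m_i E_i|$, which preserves $\dim \cL$, $\edim \cL$, and reducedness of the generic member. Iterate until $d \geq m_1 + m_2 + m_3$. The quasi-homogeneous cases covered by \cite{CM1} then provide a base for the induction.

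The main step is an induction on $d$ (or on $\sum_i \binom{m_i+1}{2}$) via Hirschowitz's m\'ethode d'Horace diff\'erentielle: specialize a chosen subset of the $p_i$ onto a line $\ell \subset \P^2$, split $\cL$ via the restriction sequence from $\ell$ into a trace system on $\ell$ and a residual system on the blowup with appropriately reduced multiplicities, and apply the inductive hypothesis to both pieces. Semicontinuity of $h^0$ then transfers nonspeciality from the degenerate to the generic configuration; the hypothesis that $\cL$ contains a reduced divisor is used to rule out spurious $(-1)$-curves appearing in the base locus of the specialized system.

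The principal obstacle is precisely the point at which this program has been stuck for decades. Horace induction succeeds cleanly only when the trace and residual systems are numerically balanced, and at the boundary cases their virtual dimensions straddle $-1$: one must rule out, without circularly invoking SHGH itself, every way in which an exceptional curve could create extra speciality in the degenerated configuration. A complete proof would presumably require a genuinely new ingredient---a sharp cohomological vanishing, a toric or tropical degeneration, or a new positivity estimate on $X$---beyond what the classical Cremona-plus-Horace toolkit currently supplies.
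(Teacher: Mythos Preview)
There is nothing to compare against: the paper does not prove this statement. It is stated as Conjecture~\ref{conj-SHGH}, i.e., as the open SHGH conjecture, and is used only as background and as motivation for the stronger Conjecture~\ref{MainConjecture}. You yourself acknowledge this (``Since this is the (open) SHGH conjecture, what follows is a strategy rather than a complete argument''), so you are not claiming a proof either.

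What you have written is a reasonable high-level summary of the classical Cremona-reduction-plus-Horace approach, and your final paragraph accurately identifies where that program stalls. But none of this constitutes a proof, nor does the paper supply one; the statement is a genuine conjecture and should be treated as such rather than as something admitting a proof proposal.
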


The contrapositive statement ``if there is a reduced curve in $\cL$
then $\cL$ is nonspecial'' is also often useful.
Also note that if we add a very general simple point to the linear
system $\cL$, then the dimension and expected dimension drop exactly by 1. More
precisely, we have $\dim \cL' = (\dim \cL) - 1$ and
$\edim \cL' = (\edim \cL) - 1$, where $\cL'$ is the
linear system $|dH - m_1E_1 -\cdots -m_rE_r-E_{r+1}|$ on
a general blow up $\Bl_{p_1,\ldots,p_r,p_{r+1}}\P^2$.
Hence if Conjecture \ref{conj-SHGH} is only stated for systems with $\edim \cL = -1$, then by imposing additional simple points the full conjecture follows.

More refined versions of Conjecture \ref{conj-SHGH} discuss the structure of the base locus of $\cL$ more carefully and seek to completely classify the special systems.  These various refinements have been stated and compared by various authors including Segre \cite{S}, Harbourne \cite{H1}, Gimigliano
\cite{G} and Hirschowitz \cite{Hi}. The various formulations are
equivalent. See \cite{CM2,H2} for more details.

The following stronger version of the SHGH conjecture easily follows from a conjecture attributed to Hirschowitz in
\cite[Conjecture 4.9]{C}. It is also mentioned in \cite[Conjecture 3.1 (iv)]{CM2}.

\begin{conjecture}\label{conj-Hirschowitz}
If the general curve $C\in \cL$ is reduced, then $\cL$ is nonspecial and $C$ is smooth on $X$.
\end{conjecture}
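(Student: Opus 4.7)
The argument naturally splits according to the two conclusions. Assertion (i), that $\cL$ is nonspecial, follows directly from Conjecture \ref{conj-SHGH}: if the general $C \in \cL$ is reduced, then $\cL$ contains a reduced divisor, and the contrapositive of SHGH immediately yields nonspeciality. The substantive content lies in assertion (ii), smoothness of the general member on $X$.

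For (ii), the plan is to argue by contradiction via an incidence-variety construction that reduces to SHGH on an auxiliary blow-up. Assume the general $C \in \cL$ is reduced but singular on $X$. Form the incidence scheme $\cI = \{(C,x) \in |\cL| \times X : x \in \mathrm{Sing}(C)\}$ with projections $\pi_1$ to $|\cL|$ and $\pi_2$ to $X$. Our hypothesis makes $\pi_1$ dominant, so $\dim \cI \geq \dim |\cL|$. Let $Y = \overline{\pi_2(\cI)} \subset X$, and split on $\dim Y \in \{0,1,2\}$.

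The case $\dim Y = 2$ is the model. For a very general $x \in X$, the fibre $\pi_2^{-1}(x)$ has dimension at least $\dim |\cL| - 2$, so the sublinear system of curves in $\cL$ singular at $x$ has dimension at least $\dim \cL - 2$. Take such an $x$ to be an additional very general point $p_{r+1}$, and pass to $X' = \Bl_{p_1,\ldots,p_{r+1}} \P^2$. The system $\cL' = \pi^*\cL - 2E_{r+1}$ then satisfies $\dim \cL' \geq \dim \cL - 2 > \dim \cL - 3 = \edim \cL'$ (using nonspeciality of $\cL$ from (i)), hence $\cL'$ is special. By Conjecture \ref{conj-SHGH} applied to $\cL'$ on $X'$, every member of $\cL'$ is nonreduced. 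On the other hand, the reduced locus in $|\cL|$ is dense by hypothesis, meets each generic fibre of $\pi_2$ in a dense open, and proper transforms preserve reducedness; hence the general member of $\cL'$ is reduced. This contradicts SHGH and settles the case.

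The main obstacle is the residual case $\dim Y \leq 1$, in which one cannot directly apply SHGH at the auxiliary point since SHGH requires very general points, whereas a point constrained to $Y$ is not very general in $X$. To make progress one has to extract geometric constraints on $Y$ from the very-general position of $p_1, \ldots, p_r$: $Y$ should be forced to be part of the base locus of a related system, so that it appears on a short classifiable list (for instance $(-1)$-curves, or one of the $E_i$). Once $Y$ is identified, one subtracts it with appropriate multiplicity from $\cL$ to obtain a strictly smaller system and sets up an induction on $(d, \sum m_i)$ in the spirit of the Horace method. Carrying out this classification and induction is the genuine technical core, and is the reason Conjecture \ref{conj-Hirschowitz} is a proper strengthening of Conjecture \ref{conj-SHGH} rather than a formal consequence of it.
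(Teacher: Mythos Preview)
The statement is a conjecture, and the paper does not prove it unconditionally. What the paper actually does (in the Remark following Conjecture~\ref{conj-Hirschowitz2}) is show that Conjecture~\ref{conj-Hirschowitz} follows from Conjectures~\ref{conj-SHGH} \emph{and}~\ref{conj-Hirschowitz2} together. The argument there is short: reduce to $\edim\cL=0$ by adding simple base points, get nonspeciality from SHGH, apply Conjecture~\ref{conj-Hirschowitz2} to each irreducible component, and observe that distinct components are disjoint because the expected dimensions add.

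Your proposal attempts something strictly harder: to derive Conjecture~\ref{conj-Hirschowitz} from Conjecture~\ref{conj-SHGH} alone, via an incidence correspondence and a reduction to SHGH on a further blowup. Your $\dim Y=2$ case is essentially sound (modulo choosing an irreducible component of $\cI$ dominating $|\cL|$ before arguing density of the reduced locus in a general fibre of $\pi_2$). But as you yourself say, the $\dim Y\leq 1$ case does not go through, because SHGH gives no control at a point constrained to lie on a fixed curve or at a fixed point. This is not a fixable technicality: it is precisely the content supplied by Conjecture~\ref{conj-Hirschowitz2}, which the paper takes as an independent hypothesis. So your write-up is an honest explanation of why SHGH alone is insufficient, but it is not a proof of the statement, and it is not what the paper does. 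If your aim is to match the paper, you should assume Conjecture~\ref{conj-Hirschowitz2} as an additional input and run the component-by-component induction in the Remark instead.
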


More precisely, a slightly weaker version of the original conjecture from \cite{C} reads as follows.

\begin{conjecture}[Hirschowitz {\cite[Conjecture 4.9]{C}}]\label{conj-Hirschowitz2}
Suppose $\cL$ is nonempty and nonspecial, and let $C\in \cL$ be general.  Suppose $p_a(C) \geq 0$ and $C$ is reduced.  Then $C$ is smooth and irreducible on $X$.
\end{conjecture}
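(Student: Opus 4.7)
The plan is to argue by contradiction, reducing each of the possible failures (non-smooth general member, reducible general member) to a violation of Conjecture \ref{conj-SHGH} on $X$ or on a blow up of $X$ at one additional very general point. Since Conjecture \ref{conj-Hirschowitz2} is itself widely open and essentially at least as strong as the ordinary SHGH conjecture, what follows is more a reduction strategy than an unconditional attack; I indicate at each step where Conjecture \ref{conj-SHGH} is invoked.

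For smoothness, suppose a general $C \in \cL$ has a singular point $x \in X$ and form the incidence variety
\[
I = \{(C,x) \in \cL \times X : x \in \mathrm{Sing}(C)\}.
\]
Since general $C$ is reduced, $\mathrm{Sing}(C)$ is finite, so $I \to \cL$ is surjective with finite fibers and $\dim I = \dim \cL$. Consider the projection $I \to X$. If it dominates, then for a very general $x \in X$ the sublinear system $\cL_x \subset \cL$ of curves singular at $x$ satisfies $\dim \cL_x \geq \dim \cL - 2$. Regarding $x$ as an additional very general base point, on $\tilde X = \Bl_{p_1,\ldots,p_r,x}\P^2$ the linear system
\[
\tilde \cL = \bigl|\, dH - \sum_i m_iE_i - 2E_{r+1}\,\bigr|
\]
agrees with $\cL_x$ after taking strict transforms, and has expected dimension $\edim \cL - 3 = \dim \cL - 3$ by the nonspeciality of $\cL$. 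Hence $\dim \tilde \cL > \edim \tilde \cL$, so $\tilde \cL$ is special on $\tilde X$; yet its general member is reduced, being the strict transform of a reduced $C \in \cL$, contradicting Conjecture \ref{conj-SHGH} on $\tilde X$. If instead $I \to X$ fails to dominate, then all singular points of general members lie on a proper subvariety $F \subset X$, which must be part of the fixed locus of $\cL$; invoking the refined SHGH classification of base loci (see e.g.\ \cite{H1,CM2}), $F$ can only consist of $(-1)$-curves, and an explicit analysis together with the reducedness of general $C$ excludes this.

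For irreducibility, suppose the general $C$ decomposes as $C_1 + C_2$ with $C_1, C_2$ reduced and sharing no component. The smoothness step already forces $C_1 \cap C_2 = \emptyset$, for otherwise $C$ would be singular at intersection points. Then $\cL$ splits as $\cL_1 + \cL_2$ with $\dim \cL = \dim \cL_1 + \dim \cL_2$, and an inductive application of Conjecture \ref{conj-Hirschowitz2} to $\cL_1, \cL_2$ (on numerically smaller classes) gives that $C_1, C_2$ are smooth and irreducible. Combining the genus formula
\[
p_a(C) = p_a(C_1) + p_a(C_2) + C_1\cdot C_2 - 1 = p_a(C_1) + p_a(C_2) - 1
\]
with $p_a(C) \geq 0$ and the nonspeciality of the summed system $\cL_1 + \cL_2$ against SHGH should rule out such a splitting by contradicting the expected-versus-actual dimension bookkeeping for the disjoint summands.

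The main obstacle is that every step of this argument invokes Conjecture \ref{conj-SHGH} in an essential way: one always needs to know that a certain auxiliary special system cannot have a reduced general member. Consequently, an unconditional proof of Conjecture \ref{conj-Hirschowitz2} would at the very least yield SHGH itself, and conversely the strategy above reduces the present conjecture to Conjecture \ref{conj-SHGH} together with the refined base-locus description of Segre--Harbourne--Gimigliano--Hirschowitz. The most delicate case in practice is the non-dominant subcase of the smoothness step, where the fixed-curve analysis requires the full refined classification and not merely the dimension statement of SHGH.
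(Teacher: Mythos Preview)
This statement is a \emph{conjecture} in the paper, not a theorem: the paper offers no proof of it. It is quoted from Ciliberto's survey \cite{C} as an open problem, and the only thing the paper does with it is the Remark immediately following, which shows that Conjectures~\ref{conj-SHGH} and~\ref{conj-Hirschowitz2} \emph{together} imply Conjecture~\ref{conj-Hirschowitz}. So there is no ``paper's own proof'' to compare against, and no proof is expected.

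What you have written is, as you yourself say, a reduction strategy rather than a proof, and even as a reduction it has real gaps. In the non-dominant subcase of the smoothness step you assert that the locus $F$ swept out by singular points of general members ``must be part of the fixed locus of $\cL$''; this is not justified. A family of curves can very well acquire singularities along a curve that is not a base curve of the system (the singular point simply moves along $F$ as $C$ varies), so the appeal to the $(-1)$-curve classification of fixed components does not apply. In the irreducibility step, the phrase ``should rule out such a splitting by contradicting the expected-versus-actual dimension bookkeeping'' is where the argument would need to happen, and it is not carried out; in fact nothing you wrote uses the hypothesis $p_a(C)\ge 0$, without which the conclusion is false (e.g.\ a disjoint union of two $(-1)$-curves is the general, reduced, smooth, \emph{reducible} member of a nonspecial system). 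More broadly, the paper's own logic treats Conjecture~\ref{conj-Hirschowitz2} as an input independent of SHGH, precisely because no such reduction is known; if SHGH alone implied Conjecture~\ref{conj-Hirschowitz2}, the Remark after it would be superfluous.
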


\begin{remark}
Let us show that Conjectures \ref{conj-SHGH} and \ref{conj-Hirschowitz2} imply Conjecture \ref{conj-Hirschowitz}.  By imposing additional simple points, it suffices to check Conjecture \ref{conj-Hirschowitz} in the case where $\edim \cL = 0$.  Let $C\in \cL$ be general and suppose it is reduced.  By Conjecture \ref{conj-SHGH}, $\cL$ is nonspecial.  If $C$ is irreducible, then $p_a(C)\geq 0$ and $C$ is smooth by Conjecture \ref{conj-Hirschowitz2}.  Suppose $C$ is not irreducible.  Then $C = C' + C''$ for some curves $C'\in \cL'$ and $C''\in \cL''$.  Since $\edim \cL = 0$ and $C$ is reduced, we have $\cL = \{C\}$ and therefore $\cL' = \{C'\}$ and $\cL'' = \{C''\}$.  By Conjecture \ref{conj-SHGH}, we have $\edim \cL' = \edim \cL'' = 0$ and $$\edim \cL = \edim \cL' + \edim \cL'' + C'\cdot C''.$$ Therefore $C'\cdot C'' = 0$, and if $C'$ and $C''$ are smooth then so is $C$.  By induction on the number of irreducible components, $C$ is smooth.
\end{remark}

\subsection{A generalized SHGH conjecture} We now state a stronger SHGH conjecture by studying the loci in $\cL = |dH - m_1E_1 - \cdots -m_rE_r|$ of curves with a singularity of some multiplicity $t\geq 2$.  Fix a point $x \in X$.  Then the expected codimension in $\cL$ of curves with a singularity of multiplicity $t$ at $x$ is ${t+1\choose 2}$.  As the point $x\in X$ varies, the expected codimension in $\cL$ of curves with a singularity of multiplicity $t$ at some point is ${t+1\choose 2}-2$.

Various examples show that it is too much to hope for that the locus in $\cL$ of curves with a $t$-uple point always has the expected codimension.  But, the source of these counterexamples seems to be nonreduced curves in the series.

\begin{example}
For example, let $r = 8$ and consider the series $$\cL = |6H - 2\sum_{i=1}^8 E_i|.$$  The SHGH conjecture implies that $\dim \cL = 28 - 24  -1 = 3$.  The expected codimension in $\cL$ of curves with a $4$-uple point is ${5\choose 2} - 2 = 8$, so we would expect that there are not any such curves.  On the other hand, in the pencil of cubics through the 8 points there is a singular cubic, and its square is a member of $\cL$ with a $4$-uple point.
\end{example}

In general, the locus in $\cL$ of nonreduced curves can be quite large and contain highly singular curves, but it seems possible that this is the only source of unexpectedly singular curves in linear series.  We  make the following conjecture.

\begin{conjecture}\label{MainConjecture}
Let $X$ be a blow up of $\P^2$ at $r\ge 0$ very general
points. Suppose $d \geq 1$, $t\geq 1$, and $m_1,\ldots,m_r\geq 0$ are
integers such that $${d+2 \choose 2} -\sum_{i=1}^r {m_i+1 \choose 2} \le \max\left\{{t+1\choose 2} - 2,0\right\}.$$
Then any curve $C\in |dH - m_1E_1 -\cdots -m_rE_r|$ which has a point of multiplicity $t$ is non-reduced.
\end{conjecture}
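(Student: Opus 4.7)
The plan is to reduce Conjecture \ref{MainConjecture} to an instance of the SHGH conjecture on a blow up at $r+1$ very general points, via a moving argument for the hypothetical singular point. The case $t=1$ is trivial since every point has multiplicity at least $1$ on every curve, so one may assume $t\geq 2$. By imposing additional very general simple base points (exactly as in the paragraph preceding Conjecture \ref{conj-Hirschowitz}, where both dimension and expected dimension drop by $1$ per new point), one may further reduce to the extremal case
$$\binom{d+2}{2}-\sum_{i=1}^r \binom{m_i+1}{2} \;=\; \binom{t+1}{2}-2,$$
equivalently $\edim \cL = \binom{t+1}{2}-3$. Suppose for contradiction that $C\in \cL$ is reduced and has a point $x$ of multiplicity at least $t$.

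Form the incidence variety $I = \{(C',x')\in \cL \times X : \mult_{x'}C' \geq t\}$ with its two projections. For any fixed $x'$, the fiber $I_{x'}$ is cut out in $\cL$ by $\binom{t+1}{2}$ linear conditions (vanishing of the Taylor expansion through order $t-1$), and off a Zariski closed bad locus in $X$ these conditions are independent, giving $\dim I_{x'} \leq \edim \cL - \binom{t+1}{2} = -3$, so $I_{x'}$ is empty for very general $x'$. The main step would be to show that one can nevertheless arrange $x$ itself to be very general with respect to $p_1,\ldots,p_r$: granting this, blowing up $x$ yields $X' = \Bl_{p_1,\ldots,p_r,x}\P^2$, a blow up of $\P^2$ at $r+1$ very general points, on which the strict transform $\widetilde C \in \cL' := |dH-\sum m_i E_i - tE_{r+1}|$ is still reduced, yet $\edim\cL' = \edim\cL - \binom{t+1}{2} = -3$. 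Applying Conjecture \ref{conj-SHGH} to $\cL'$ on $X'$ would then force every member of $\cL'$ to be nonreduced, a contradiction.

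The main obstacle is this moving step. If $x$ cannot be taken very general, then an irreducible component $I_0\subset I$ through $(C,x)$ would project to a proper closed subvariety $Y = \pi_2(I_0)\subsetneq X$. Since $\{p_1,\ldots,p_r\}$ is very general, $Y$ is invariant under the monodromy that permutes the $p_i$, so it must be a union of geometrically distinguished subvarieties: exceptional divisors $E_i$, proper transforms of lines $\overline{p_ip_j}$, conics through five of the $p_i$, and more generally $(-1)$-curves or other fixed components of nearby linear systems. One would need to analyze each such possibility directly, bounding via intersection theory and the extremal numerical hypothesis on $(d,m_1,\ldots,m_r,t)$ the multiplicity that a reduced divisor in $\cL$ could achieve at a general point of such a special locus, and ruling this out case by case. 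Controlling these base-locus obstructions is precisely what makes Conjecture \ref{MainConjecture} a genuine strengthening of SHGH; it appears to require inputs well beyond the contrapositive use of SHGH that drives the main step above, which is why the authors leave the full statement as a conjecture.
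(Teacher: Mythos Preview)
This statement is labeled a \emph{conjecture} in the paper and is not proved there; the authors explicitly present it as a strengthening of SHGH and only verify that the cases $t=1$ and $t=2$ coincide with Conjectures \ref{conj-SHGH} and \ref{conj-Hirschowitz} respectively. There is therefore no proof in the paper to compare against, and your write-up is not a proof either---as you acknowledge in the final paragraph, the moving step for the singular point is a genuine obstruction, and that is precisely why the statement remains conjectural.

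A couple of technical remarks on your sketch. First, the reduction ``to the extremal case'' by imposing additional very general simple base points is stated the wrong way around: each new simple point lowers the left-hand side $\binom{d+2}{2}-\sum_i\binom{m_i+1}{2}$ by $1$, so starting from a strict inequality you move \emph{further} from equality, not toward it. Fortunately this reduction is unnecessary: if $x$ could be taken very general, then blowing it up gives $\edim\cL' = \edim\cL - \binom{t+1}{2} \leq -3$ in every case, and since $\widetilde C\in\cL'$ is reduced and $\cL'$ is nonempty, $\cL'$ is special and SHGH already yields the contradiction. Second, the heart of the matter is exactly the failure you isolate: there is no mechanism to guarantee that the singular point $x$ of a specific reduced curve $C$ can be taken in very general position relative to $p_1,\ldots,p_r$. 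The incidence-variety picture you set up is the right one, but ruling out that $\pi_2(I_0)$ is a proper subvariety would require controlling singular points lying on special loci (exceptional divisors, $(-1)$-curves, etc.), and no such control is currently available. This is the substantive gap, not a flaw in your outline, and it is the reason the paper states \ref{MainConjecture} as a conjecture rather than a theorem.
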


Some initial cases of Conjecture \ref{MainConjecture} are well-known.  In particular, the case $t=1$ is equivalent to the $\edim \cL = -1$ case of Conjecture \ref{conj-SHGH}, so it is equivalent to Conjecture \ref{conj-SHGH}.  When $t=2$, the conjecture is the $\edim \cL = 0$ case of Conjecture \ref{conj-Hirschowitz}, so it is equivalent to Conjecture \ref{conj-Hirschowitz}.

\begin{remark}
We could weaken Conjecture \ref{MainConjecture} by changing the conclusion to ``Then any curve $C\in \cL$ which has a point of multiplicity $t$ is non-reduced or non-irreducible.''  This weakened version would still be strong enough to carry out the arguments in the next section.  We highlight the stronger version instead since it is more analogous to the SHGH and Hirschowitz conjectures \ref{conj-SHGH} and \ref{conj-Hirschowitz}.
\end{remark}

\section{The submaximality threshold for 10 or more points}\label{section2}

For the rest of the paper, we assume that Conjecture \ref{MainConjecture} is true.  Under this assumption, we prove that Theorem \ref{thm-Rational} is sharp.

\begin{theorem}\label{thm-Irrational}
Suppose Conjecture \ref{MainConjecture} is true, and let $r\geq 10$.  Then the submaximality threshold $\mu_0$ for $r$ exists, and $$\mu_0 = \begin{cases} \frac{77}{24} &\textrm{if }r=10\\ 4- \frac{\sqrt{3}}{3} &\textrm{if } r=11\\ \frac{1}{6}(26-\sqrt{13}) & \textrm{if } r=13\\ \sqrt{r+1} & \textrm{if } r= 12 \textrm{ or } r\geq 14.\end{cases}$$
\end{theorem}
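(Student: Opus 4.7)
Plan. Theorem \ref{thm-Rational} already provides the upper bound $\mu_0 \leq$ (claimed value) by exhibiting weakly submaximal curves, so I need to prove the matching lower bound: for $\mu$ strictly less than the claimed threshold, $L(\mu)$ has no weakly submaximal curve. Suppose for contradiction that $(C, x)$ is such a curve, with numerical class $C \equiv dH - \sum m_i E_i$ and $t = \mathrm{mult}_x C$. The exceptional divisor case $C = E_i$ (so $d = 0$, $t = 1$) immediately forces $\mu \geq \sqrt{r+1}$, so assume $d \geq 1$ and $m_i \geq 0$. Writing $\sigma = \sum m_i$, the submaximality condition $d\mu - \sigma \leq t\sqrt{\mu^2 - r}$, combined with ampleness of $L(\mu)$ (hence $d\mu > \sigma > 0$), gives upon squaring the quadratic inequality $(d^2 - t^2)\mu^2 - 2d\sigma\mu + \sigma^2 + t^2 r \leq 0$; since $C$ is reduced irreducible on $\mathbb{P}^2$ we have $t \leq d$, and the case $d = t$ degenerates to $\mu \geq (\sigma^2 + t^2 r)/(2d\sigma)$ and is handled similarly.

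By the contrapositive of Conjecture \ref{MainConjecture} applied to $|C|$, the reducedness of $C$ forces
$$\sum_i m_i^2 + \sigma \;\leq\; d^2 + 3d - \max\{t^2 + t - 4,\; 0\}.$$
Writing $\sigma = qr + s$ with $0 \leq s < r$, the integer-refined Cauchy--Schwarz bound $\sum m_i^2 \geq s(q+1)^2 + (r-s)q^2 = \sigma^2/r + s(r-s)/r$ combined with the above yields an explicit upper bound $\sigma \leq \sigma_{\max}(d, t, r)$. A direct computation shows that on the range $\sigma < d\sqrt{r}$ the smaller root of the quadratic in $\mu$ is strictly decreasing in $\sigma$, so substituting $\sigma_{\max}$ gives the sharp lower bound $\mu \geq \mu_{-}(d, t, r) := [d\sigma_{\max} - t\sqrt{\sigma_{\max}^2 - r(d^2 - t^2)}]/(d^2 - t^2)$; and if $\sigma_{\max}^2 < r(d^2 - t^2)$ then no weakly submaximal curve of class $(d, t)$ is possible at all.

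It remains to minimize $\mu_{-}(d, t, r)$ over $(d, t)$. An asymptotic expansion gives $\sigma_{\max}(d, t, r) = \sqrt{r}\,d + (3\sqrt{r} - r)/2 + O(1/d)$, while the feasibility threshold is $\sqrt{r(d^2 - t^2)} = \sqrt{r}\,d - t^2\sqrt{r}/(2d) + O(1/d^3)$. Since $(3\sqrt{r} - r)/2 < 0$ for $r \geq 10$, feasibility fails for all $d$ larger than an explicit $O(t^2)$ bound, reducing the problem to a finite enumeration in $(d, t)$. For each $r \in \{10, 11, 13\}$ the minimum is realized by the $t = 2$ pencil from Theorem \ref{thm-Rational}, e.g., $(d, \sigma, t) = (10, 31, 2)$ for $r = 10$ giving exactly $\mu_0 = 77/24$, while for $r = 12$ and $r \geq 14$ one checks that every $(d, t)$ with $t \geq 2$ is either infeasible or yields $\mu_{-} \geq \sqrt{r+1}$, and every $(d, t)$ with $t = 1$ and $d \geq 1$ is either infeasible or gives a strictly larger threshold, leaving the exceptional divisor with $\mu_0 = \sqrt{r+1}$. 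The main obstacle is that the integer refinement is essential: the naive relaxation $\sum m_i^2 \geq \sigma^2/r$ gives $\sigma_{\max} \approx 31.4$ and hence $\mu_{-} \approx 3.165 < 77/24$ for $(r, d, t) = (10, 10, 2)$, so one must carefully track the residue correction $s(r-s)/r$ in each residue class of $\sigma \bmod r$ to pin down the sharp values of $\mu_0$ matching Theorem \ref{thm-Rational}.
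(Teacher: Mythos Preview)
Your strategy is essentially the paper's own: the same quadratic in $\mu$ from squaring the submaximality inequality, the same use of Conjecture~\ref{MainConjecture} to bound $\sum m_i^2 + \sigma$, and your integer-refined Cauchy--Schwarz optimum is precisely the paper's reduction to \emph{balanced} classes $(d;m^s,(m-1)^{r-s})$, while maximizing $\sigma$ subject to the dimension bound is the paper's further reduction to \emph{critical} pairs. The paper likewise computes your discriminant $\sigma^2 - r(d^2-t^2)$ (written $\Delta$ there, with $M=\sigma$) and the smaller root $\mu_-$, and discards pairs with $\Delta < 0$ or $\mu_- \geq \mu_0$.

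The gap is in your finiteness claim. Your asymptotic estimate correctly shows that feasibility $\Delta \geq 0$ forces $d \lesssim t^2/(\sqrt{r}-3)$, so for each fixed $t$ only finitely many $d$ remain. But this does not bound $t$: combined with $t < d$ it yields only the \emph{lower} bound $t \gtrsim \sqrt{r}-3$, and the feasible set of pairs $(d,t)$ is infinite for every $r\geq 10$, so ``finite enumeration in $(d,t)$'' is not yet justified. The paper supplies the missing ingredient in a separate lemma: substituting $d \leq (r\overline m + t\sqrt{\mu^2-r})/\mu$ from (\ref{eqn-rationality}) into (\ref{eqn-edim}) produces a quadratic form $Q(\overline m,t)$ whose positivity region in the $(\overline m,t)$-plane is a bounded parabolic arc, giving explicit uniform bounds such as $t\leq 5$ for $r=10$ and $t\leq 2$ for $r\geq 13$. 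Only then is the enumeration genuinely finite (carried out by computer for $10\leq r\leq 19$); for $r\geq 20$ the same lemma forces $\overline m < 1$, hence all $m_i\in\{0,1\}$, and a short direct computation shows $\Delta < 0$ for every critical pair without enumeration.
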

\begin{proof}
Let $\mu_0$ be the number in the statement, and let $\mu$ be a number with $\sqrt{r} < \mu < \mu_0$.  By Theorem \ref{thm-Rational} we need to show there is no weakly submaximal curve for $L(\mu)$.  If there is a weakly submaximal curve for $L(\mu)$ then there is an irreducible and reduced curve $C$ and a point
$x\in C$ such that $$\frac{L(\mu) \cdot C}{\mult_x C} \leq \sqrt{L(\mu)^2}.$$
Since $\mu< \sqrt{r+1}$, the curve $C$ is not an exceptional divisor $E_i$, so
$$\OO_X(C) = \OO_X(dH - \sum m_i E_i)$$ with $d > 0$ and $m_i\geq 0$.
Let $t = \mult_x C$, so $1\leq t \leq d$.  Then by Conjecture \ref{MainConjecture}
we have the simultaneous
inequalities \begin{align}\label{eqn-rationality}\tag{$\ast$} \frac{\mu
               d-\sum_i m_i}{t} & \le  
                                      \sqrt{\mu^2-r}\\\label{eqn-edim}\tag{$\ast\ast$}{d+2\choose
               2} - \sum_{i=1}^r {m_i+1\choose 2} &> \max\left\{
                                                      {t+1\choose 2}-2
                                                      ,
                                                      0\right\}.\end{align}
                                                      
                                                      We furthermore claim that we may assume $t<d$.  Since $C$ is reduced and irreducible, if $t=d$ then $t=d=1$.  In that case (\ref{eqn-edim}) shows $\sum_i m_i \leq 2$, and (\ref{eqn-rationality}) gives $\mu \geq 1 + \frac{r}{4}.$  But this contradicts $\mu < \sqrt{r+1}$.  
                                                      
                                                       In Proposition \ref{prop-irrational} we will show that since $\mu < \mu_0$ these inequalities cannot be satisfied.
\end{proof}

The main work in the proof of Theorem \ref{thm-Irrational} then lies in Proposition \ref{prop-irrational}, which is essentially numerical.  To avoid repeating our assumptions we make the following definition.

\begin{definition}
A \emph{test pair} $(C,t)$ consists of a curve class $C = dH - \sum_{i=1}^r m_iE_i$, where $d\geq 2$ and $m_i\geq 0$ are integers, and an integer $t$ satisfying $1\leq t < d$.
\end{definition}

Notice that if $(C,t)$ is a test pair satisfying (\ref{eqn-edim}) then the curve class $C$ is effective, since the expected dimension of the linear series $|C|$ is nonnegative.

\begin{proposition}\label{prop-irrational} Let $r\geq 10$, and let $\mu_0$ be the number in the statement of Theorem \ref{thm-Irrational}.  Suppose $\mu$ is a number with $\sqrt{r} < \mu < \mu_0$.  There is no test pair $(C,t)$  satisfying (\ref{eqn-rationality}) and (\ref{eqn-edim}).
\end{proposition}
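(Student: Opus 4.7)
The plan is to combine (\ref{eqn-rationality}) and (\ref{eqn-edim}) with the Hodge index theorem and a sharp integer optimization over the multiplicities $(m_1,\ldots,m_r)$, reducing to a finite numerical case analysis. Writing $M = \sum m_i$ and $S = \sum m_i^2$, and using that both sides of (\ref{eqn-edim}) are integers, the strict inequality sharpens to $S + M \le N(d,t)$, where $N(d,t) = d^2 - t^2 + 3d - t + 4$ if $t \ge 2$ and $N(d,1) = d^2 + 3d$.

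The first step is to bound $d$. Since $L(\mu)^2 = \mu^2 - r > 0$, the Hodge index theorem applied to $L(\mu)$ and $C$ gives $(L(\mu)\cdot C)^2 \ge L(\mu)^2 \cdot C^2$, and combined with (\ref{eqn-rationality}) this yields $C^2 \le t^2$, i.e., $S \ge d^2 - t^2$. Substituting into (\ref{eqn-edim}) gives the linear upper bound $M \le 3d - t + 4$ for $t \ge 2$ (and $M \le 3d + 1$ for $t = 1$). On the other hand (\ref{eqn-rationality}) gives $M \ge \mu d - t\sqrt{\mu^2 - r}$. In every case of the theorem $\mu < \mu_0 \le \sqrt{r+1}$, so $\sqrt{\mu^2 - r} < 1$, and combining these inequalities yields $(\mu - 3) d < 4$. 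This confines $d$ (and via $t < d$ also $t$) to a finite explicit range depending on $r$.

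For each remaining pair $(d,t)$, the maximum of $M$ over non-negative integer tuples $(m_1,\ldots,m_r)$ satisfying (\ref{eqn-edim}) is attained by making the multiplicities as equal as possible. Writing $M = qr + s$ with $0 \le s < r$, the minimal value of $S$ for this $M$ is $S_{\min} = rq^2 + 2qs + s$, so (\ref{eqn-edim}) becomes $(q+1)(rq + 2s) \le N(d,t)$, determining an explicit integer $M_{\max}(d,t,r)$. The condition (\ref{eqn-rationality}) then reads $\mu d - t\sqrt{\mu^2 - r} \le M_{\max}$, and squaring yields a quadratic inequality in $\mu$ whose smaller root $\mu_{\min}(d,t,r)$ is the smallest $\mu$ for which the test pair constraints are simultaneously satisfiable. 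The remaining claim is that $\mu_{\min}(d,t,r) \ge \mu_0$ for every admissible $(d,t)$.

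The hard part is the final case analysis. For small $r$, particularly $r = 10$ where $d$ may a priori reach $24$, there are many $(d,t)$ pairs to examine, and the Hodge bound $M \le 3d - t + 4$ is not sharp enough on its own: at $r = 10$, $(d,t) = (10,2)$, the extremal curve has $M = 31$ while Hodge only gives $M \le 32$. The sharp integer optimum $M_{\max}$ above closes this gap and makes the quadratic inequality tight in the extremal cases. For $r \in \{10, 11, 13\}$ equality $\mu_{\min} = \mu_0$ is attained precisely at the extremal test pair identified in Theorem~\ref{thm-Rational} ($(d,t) = (10,2)$, $(4,2)$, $(4,2)$ respectively), while for $r = 12$ and $r \ge 14$ the threshold $\mu_0 = \sqrt{r+1}$ is realized by an exceptional divisor (outside the test pair framework, which requires $d \ge 2$), and one checks that every admissible test pair yields the strict inequality $\mu_{\min}(d,t,r) > \sqrt{r+1}$.
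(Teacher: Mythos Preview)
Your outline follows essentially the same route as the paper: balance the multiplicities to find the extremal $M$ for each $(d,t)$, then test the resulting pair against the quadratic in $\mu$ (your $\mu_{\min}$ is the paper's $\mu_-$ in Lemma~\ref{lem-balancedCriterion}, and your $M_{\max}$ singles out what the paper calls the \emph{critical pair}).  The one real difference is in how the search space is cut down.  The paper applies Cauchy--Schwarz to the $m_i$ and substitutes (\ref{eqn-rationality}) into (\ref{eqn-edim}) to obtain a quadratic $Q(\overline m,t)$; analyzing the region $\{Q>0\}$ gives sharp bounds on $t$ (e.g.\ $t\le 5$ for $r=10$) and on $\overline m$, and hence a short list of critical pairs.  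Your Hodge-index route instead bounds $d$ via $(\mu-3)d<4$, which for $r=10$ allows $d\le 24$ and all $t<d$, so your finite check is considerably larger, though still finite.

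Two places need tightening.  First, the step ``Hodge plus (\ref{eqn-rationality}) gives $C^2\le t^2$'' uses $(L(\mu)\cdot C)^2\le t^2 L(\mu)^2$, which follows from (\ref{eqn-rationality}) only when $\mu d - M\ge 0$; this positivity is not among the hypotheses (it would follow from $L(\mu)$ nef, i.e.\ Nagata).  If instead $\mu d<M$ then $M>d\sqrt r$, and Cauchy--Schwarz $S\ge M^2/r>d^2$ together with (\ref{eqn-edim}) again yields $d(\sqrt r-3)<4$, so the bound on $d$ survives; but you must also verify in the finite check that $M_{\max}(d,t,r)<d\sqrt r$, since otherwise (\ref{eqn-rationality}) would hold trivially and your squaring step is vacuous.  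Second, ``one checks'' for $r\ge 14$ covers infinitely many $r$.  Your own bound $d<4/(\sqrt r-3)$ kills all test pairs once $r\ge 25$, so only finitely many $(r,d,t)$ remain; say this explicitly (the paper gives a short uniform argument for $r\ge 20$).
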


\subsection{Bounding the multiplicities}  Suppose $(C,t)= (dH - \sum_i m_i E_i,t)$ is a test pair satisfying (\ref{eqn-rationality}) and (\ref{eqn-edim}), and let $\overline m = \frac{1}{r}\sum_i m_i\in \QQ$ be the average multiplicity.  In this section we bound $\overline m$ and $t$ uniformly in terms of $r$, in order to decrease the search space for counterexamples to Proposition \ref{prop-irrational}.

From (\ref{eqn-rationality}) and (\ref{eqn-edim}) and Cauchy-Schwarz
we conclude
\begin{eqnarray}\label{eqn-rationality2}
\frac{\mu d - r\overline m}{t} &\le& \sqrt{\mu^2-r}\\ \label{eqn-edim2}
(d+2)(d+1)-r(\overline m+1)\overline m & >&  (t+1)t-4.
\end{eqnarray}
Rearrange (\ref{eqn-rationality2}) to get $$d \le \frac{r\overline m+t\sqrt{\mu^2-r}}{\mu}.$$ Now we substitute this inequality into (\ref{eqn-edim2}) and rearrange the terms to prove the following quadratic inequality in $\overline m$ and $t$.
\begin{lemma}
If $(C,t)$ is  a test pair satisfying (\ref{eqn-rationality}) and (\ref{eqn-edim}) with average multiplicity $\overline m$, then the quadratic expression 
\begin{align*}Q(\overline m,t)&:=\left(\frac{r^2}{\mu^2} - r\right) \overline m^2 + \frac{2r \sqrt{\mu^2-r}}{\mu^2} \overline mt- \frac{r}{\mu^2} t^2\\&\quad + \left( \frac{3r}{\mu}-r\right)\overline{m}+\left(\frac{3 \sqrt{\mu^2-r}}{\mu}-1\right) t + 6\end{align*} satisfies $Q(\overline m,t) > 0$.  Therefore, the point $(\overline m,t)$ lies in the region $\Omega$ of the $(\overline m,t)$-plane defined by the inequalities $t\geq 1$, $\overline m \geq 0$, and $Q(\overline m,t) > 0$.
\end{lemma}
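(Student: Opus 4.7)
The proof is a direct algebraic manipulation following the roadmap in the paragraph preceding the statement. My plan is to execute three short steps, producing the quadratic $Q$ by systematically eliminating the individual multiplicities $m_i$ and the degree $d$ in favor of the average $\overline m$ and $t$.

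First, I would pass from the pointwise hypotheses (\ref{eqn-rationality}) and (\ref{eqn-edim}) to the averaged inequalities (\ref{eqn-rationality2}) and (\ref{eqn-edim2}). Summing the multiplicities on the left of (\ref{eqn-rationality}) produces (\ref{eqn-rationality2}) immediately, since $\sum_i m_i = r \overline m$. For (\ref{eqn-edim}), the Cauchy--Schwarz inequality $\sum_i m_i^2 \ge r\overline m^2$ (equivalently, convexity of $x \mapsto x(x+1)/2$) yields $\sum_i \binom{m_i+1}{2} \ge \tfrac{1}{2} r \overline m(\overline m+1)$; combining with (\ref{eqn-edim}), clearing the factor of $2$, and replacing $\max\{(t+1)t - 4, 0\}$ by its weaker lower bound $(t+1)t - 4$ produces (\ref{eqn-edim2}).

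Second, I would solve (\ref{eqn-rationality2}) for $d$ to obtain the upper bound $d \le \mu^{-1}\bigl(r\overline m + t\sqrt{\mu^2 - r}\bigr)$ and substitute this into the left-hand side $(d+2)(d+1) = d^2 + 3d + 2$ of (\ref{eqn-edim2}). Since $d \ge 2 > 0$, both $d^2$ and $3d$ are increasing functions of $d$, so the strict inequality in (\ref{eqn-edim2}) is preserved under this substitution. Expanding $d^2$ gives $\mu^{-2}\bigl(r^2 \overline m^2 + 2 r \overline m\, t\sqrt{\mu^2-r} + t^2(\mu^2-r)\bigr)$; collecting terms and using the simplification $t^2(\mu^2-r)/\mu^2 - t^2 = -r t^2/\mu^2$ to absorb the pure-$t^2$ contribution reproduces exactly the expression $Q(\overline m, t)$ stated in the lemma. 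Hence $Q(\overline m, t) > 0$. The final assertion that $(\overline m, t) \in \Omega$ is then automatic: $\overline m \ge 0$ because each $m_i \ge 0$, and $t \ge 1$ by the definition of a test pair.

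The computation is entirely routine and there is no real obstacle; the only substantive inequality is the application of Cauchy--Schwarz, which is precisely what permits the reduction from the $r+2$ variables $(d, m_1, \dots, m_r, t)$ to the two variables $(\overline m, t)$, at the cost of discarding information about the distribution of the $m_i$. This two-variable reduction is what makes $\Omega$ tractable for the finite case analysis to follow.
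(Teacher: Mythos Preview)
Your proposal is correct and follows exactly the approach the paper takes: pass to the averaged inequalities via Cauchy--Schwarz, bound $d$ from (\ref{eqn-rationality2}), substitute into (\ref{eqn-edim2}), and collect terms. Your explicit identification of the $t^2$-simplification $t^2(\mu^2-r)/\mu^2 - t^2 = -rt^2/\mu^2$ is the only nontrivial bookkeeping step, and you have it right.
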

 Now we analyze the region $\Omega$ more carefully.
\begin{lemma}\label{lem-multiplicityBound}
Let $r \geq 10$.  If $\sqrt{r} \leq \mu \leq \sqrt{r+1}$, then the region $\Omega$ in the $(\overline m,t)$-plane is bounded.  In particular, $\Omega$ is contained in the strip defined by the inequalities $$0 \leq \overline m \leq \frac{25}{4r-12\sqrt{r}},$$
and if $t$ is an integer then
$$\begin{array}{ll} t\in \{1,2,3,4,5\} & \textrm{if $r=10$}\\
t\in \{1,2,3,4\} & \textrm{if $r=11$}\\
t\in \{1,2,3\} & \textrm{if $r=12$}\\
t\in \{1,2\} & \textrm{if $r\geq 13$}.
\end{array}$$
\end{lemma}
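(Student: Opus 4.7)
The plan is to exploit the degenerate nature of the quadratic form in $Q(\overline m, t)$.  Setting $s = \sqrt{\mu^2 - r}$, a direct computation shows that the discriminant of its quadratic part vanishes, so $Q$ factors as
\[ Q(\overline m, t) = -\frac{r}{\mu^2}(s\overline m - t)^2 - \alpha\overline m - \beta t + 6, \]
where $\alpha = r(\mu-3)/\mu$ and $\beta = (\mu-3s)/\mu$.  For $r\geq 10$ and $\mu\in[\sqrt r, \sqrt{r+1}]$ both $\alpha$ and $\beta$ are strictly positive: $\alpha > 0$ because $\mu \geq \sqrt{10} > 3$, and $\beta > 0$ because $s \leq 1 < \mu/3$.

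The bound on $\overline m$ will then be essentially immediate.  Since the squared term is nonpositive, $Q > 0$ forces the purely linear inequality $\alpha\overline m + \beta t < 6$, whence $\overline m < 6/\alpha$.  A brief monotonicity check (the function $\mu \mapsto \mu/(\mu-3)$ is decreasing on $(3,\infty)$) gives $\overline m < 6/(r-3\sqrt r) = 24/(4r-12\sqrt r) \leq 25/(4r-12\sqrt r)$, establishing the strip bound.  In particular $\Omega$ is bounded.

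For the integer bounds on $t$ the same linear argument produces only $t < 6/\beta$, which is far too weak (for $r = 10$ and $\mu$ near $\sqrt{11}$ it gives $t < 63$); one must retain the squared term.  The strategy will be, for each fixed $t$, to maximize the concave quadratic $Q(\,\cdot\,,t)$ over $\overline m \geq 0$.  The unconstrained maximizer is $\overline m^{*} = t/s - \alpha\mu^2/(2rs^2)$; substituting it back (in the range where $\overline m^{*} \geq 0$) and simplifying via the identity $\alpha + \beta s = r + s - 3\mu$ (which follows from $\mu^2 = r + s^2$) yields
\[ t < \frac{24 s^2 + r(\mu-3)^2}{4s(r + s - 3\mu)}. \]
When instead $\overline m^{*} < 0$ the maximum of $Q$ lies at $\overline m = 0$, and the constraint $r t^2/\mu^2 + \beta t < 6$ gives the even tighter bound $t \leq 2$ (e.g.\ at $s = 0$ it reduces to $t^2 + t < 6$).

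What remains is a numerical sweep.  The right-hand side above, as a function of $\mu \in [\sqrt r, \sqrt{r+1}]$, is maximized at the endpoint $s = 1$, and at that endpoint evaluates to approximately $5.95$, $4.09$, $3.25$, $2.81$ for $r = 10, 11, 12, 13$ respectively, and stays strictly below $3$ for $r \geq 14$; this yields the claimed integer ranges for $t$.  The main obstacle is not conceptual but is precisely this monotonicity analysis in $\mu$: one must verify that the supremum is indeed attained at $s = 1$ uniformly for $r \geq 10$, and splice together the regimes $\overline m^{*} \geq 0$ and $\overline m^{*} < 0$ so that the bound is valid throughout the parameter interval.
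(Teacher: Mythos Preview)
Your factorization $Q(\overline m,t) = -\tfrac{r}{\mu^2}(s\overline m - t)^2 - \alpha\overline m - \beta t + 6$ is correct and elegant; it immediately gives the strip bound $\overline m < 6/\alpha \leq 24/(4r-12\sqrt r)$, which is in fact slightly sharper than what the paper obtains.  The paper instead treats $Q(\overline m,t)$ as a quadratic in $t$, computes its discriminant $\Delta_t(\overline m)$, and locates the vertical tangent line of the parabola $Q=0$; this yields the constant $25$ rather than your $24$.  Your linear argument is cleaner here.

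For the $t$-bound, however, there is a genuine gap that you half-acknowledge but also half-obscure.  Your displayed inequality $t < \dfrac{24s^2 + r(\mu-3)^2}{4s(r+s-3\mu)}$ is obtained from the \emph{unconstrained} maximum $Q(\overline m^*,t)$, and that right-hand side diverges to $+\infty$ as $s\to 0^+$ (the numerator tends to $r(\sqrt r - 3)^2 > 0$ while the denominator tends to $0^+$).  So the sentence ``the right-hand side above, as a function of $\mu\in[\sqrt r,\sqrt{r+1}]$, is maximized at the endpoint $s=1$'' is literally false.  What is true is that the \emph{effective} threshold $t_{\max}(\mu)$, defined piecewise as the positive root of $Q(0,t)=0$ when $\overline m^* < 0$ and as your formula when $\overline m^* \geq 0$, is maximized at $s=1$; but establishing this requires exactly the splicing and monotonicity analysis you defer at the end.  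Without it the argument for the $t$-bounds is incomplete.

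For comparison, the paper does not attempt to maximize over $\overline m$ analytically.  Instead it fixes the target value $t_0$ (namely $6,5,4,3$ for $r=10,11,12,\geq 13$) and verifies directly that $Q(\overline m,t_0) < 0$ for all $\overline m\geq 0$ and all $\mu\in[\sqrt r,\sqrt{r+1}]$: by computer for $10\leq r\leq 19$, and for $r\geq 20$ by the crude estimate $-Q(\overline m,3) \geq (r-3\sqrt r - 6)\overline m + \bigl(\tfrac{9r}{r+1}-\tfrac{9}{\sqrt r}-3\bigr) > 0$.  Your route would give a closed-form expression for the exact threshold, at the cost of the monotonicity analysis; the paper's route avoids that analysis entirely but hides the numerics inside a machine check.
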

\begin{proof}
The equation $Q(\overline m,t) = 0$ defines a parabola in the
$(\overline m,t)$-plane, since the discriminant of the homogeneous
degree $2$ part is
$$\left(\frac{2r\sqrt{\mu^2-r}}{\mu^2}\right)^2+4\left(\frac{r^2}{\mu^2}-r\right)\frac{r}{\mu^2}=0.$$
Observe that the point $(\overline m,t) = (0,1)$ is in $\Omega$, since $$Q(0,1) = 5-\frac{r}{\mu^2}+\frac{3\sqrt{\mu^2-r}}\mu >0$$ since $\mu > \sqrt{r}$.

Next we establish the bound on $\overline m$. View $\overline m > 0$ as fixed and consider the discriminant $\Delta_t(\overline m)$ of the polynomial $Q(\overline m,t)$ of $t$:
$$\Delta_t(\overline m) = \frac{1}{\mu^2}\left(-(4r^2-12r\mu + 4r \sqrt{\mu^2-r})\overline m+(15r+10\mu^2-6\mu\sqrt{\mu^2-r})\right)$$
Then $\Delta_t(\overline m)$ is decreasing in $\overline m$ since $r
\geq 10$ and $\mu^2 < r+1$, and $\Delta_t(0) > 0$.  For $$\overline
m_0(\mu) :=
\frac{15r+10\mu^2-6\mu\sqrt{\mu^2-r}}{4r^2-12r\mu+4r\sqrt{\mu^2-r}}>0,$$
we have $\Delta_t(\overline m_0(\mu)) = 0$, so the parabola
$Q(\overline m,t)=0$ is tangent to and left of the vertical line $\overline{m} =
\overline m_0(\mu).$  The numerator in the quotient defining
$\overline m_0(\mu)$ is decreasing in $\mu$ on
$[\sqrt{r},\sqrt{r+1}]$, and the denominator in the quotient is
increasing in $\mu$ on $[\sqrt{r},\sqrt{r+1}]$.
This can be seen by differentiating the numerator and denominator with
respect to $\mu$ and determining the signs of the derivatives on
$[\sqrt{r},\sqrt{r+1}]$.
Thus $\overline
m_0(\mu)$ is maximized on $[\sqrt{r},\sqrt{r+1}]$ when $\mu = \sqrt{r}$, and for $\mu \in [\sqrt{r},\sqrt{r+1}]$ we have $$\overline m_0(\mu) \leq \frac{25}{4r-12\sqrt{r}}.$$ Thus the region $\Omega$ lies left of the line $\overline m = 25/(4r-12\sqrt{r})$.

Suppose $t_0>1$ is a number such that $Q(\overline m,t_0) < 0$ for all $\overline m\geq 0$.  Since $Q(0,1) > 0$, the parabola $Q(\overline m,t)=0$ crosses the $t$-axis at a point $(0,t_1)$ between $(0,1)$ and $(0,t_0)$.  Since the parabola is tangent to $\overline m = \overline m_0(\mu)$ at some point, the only possibility is that the point of tangency lies below the line $t = t_0$.  Then $\Omega$ is contained in the half-space $t \leq t_0$.

Thus to complete the proof, we must show that for all $\overline m \geq 0$ and $\sqrt{r} \leq \mu \leq \sqrt{r+1}$,
$$\begin{array}{ll}
Q(\overline m,6) < 0 & \textrm{if $r=10$}\\
Q(\overline m,5) < 0 & \textrm{if $r=11$}\\
Q(\overline m,4) < 0 & \textrm{if $r=12$}\\
Q(\overline m,3) < 0 & \textrm{if $r\geq 13$}.
\end{array}
$$
Proving these inequalities is best left to the computer; for a given $r$ and $t_0$ it is straightforward to maximize $Q(\overline m,t_0)$ on the region of $(\overline m,\mu)$ with $\overline m\geq 0$ and $\sqrt{r} \leq \mu \leq \sqrt{r+1}$. We carried this out to check the inequalities for $r \leq 19$.

Once $r\geq 20$, we can give a straightforward argument.  For $\overline m\geq 0$ and $\sqrt r \leq \mu \leq \sqrt{r+1}$, we compute
\begin{align*}-Q(\overline m,3) &= \left(r-\frac{r^2}{\mu^2}\right)\overline m^2+\left(r-\frac{3r}{\mu}-\frac{6r \sqrt{\mu^2-r}}{\mu^2}\right)\overline m +\left(-3-\frac{9\sqrt{\mu^2-r}}{\mu}+\frac{9r}{\mu^2}\right)\\
&\geq  \left(r-\frac{r^2}{r^2}\right)\overline m^2+\left(r - \frac{3r}{\sqrt{r}}-\frac{6r}{r}\right)\overline m+\left(-3 - \frac{9}{\sqrt{r}}+\frac{9r}{r+1}\right)\\
&= (r-3\sqrt{r}-6)\overline m+ \left(\frac{9r}{r+1}-\frac{9}{\sqrt{r}}-3\right).
\end{align*} Both coefficients of this linear polynomial are positive since $r\geq 20$, so $Q(\overline m,3) < 0$ for all $\overline m \geq 0$.
\end{proof}

\subsection{Balanced pairs} Suppose the test pair $(C,t) = (dH - \sum_i m_i E_i,t)$ satisfies (\ref{eqn-rationality}) and (\ref{eqn-edim}).  Write the multiplicities in decreasing order $m_1\geq m_2\geq \cdots \geq m_r$.  If $m_1 - m_r \geq 2$, we can replace $m_1$ by $m_1-1$ and $m_r$ by $m_r+1$.  Then the resulting test pair still satisfies (\ref{eqn-rationality}) and (\ref{eqn-edim}).  Thus, if Proposition \ref{prop-irrational} is false, we can find a test pair $(C,t)$ satisfying (\ref{eqn-rationality}) and (\ref{eqn-edim}) where $C$ is a \emph{balanced curve class} of the form \begin{equation}\label{eqn-balanced} dH - m(E_1+\cdots +E_s) - (m-1)(E_{s+1}+\cdots + E_r)\end{equation} We can compactly record a balanced class by the tuple $(d;m^s,(m-1)^{r-s})$, where $s>0$ is as in (\ref{eqn-balanced}).  We call a test pair $(C,t)$ a \emph{balanced pair} if $C$ is balanced.

Given a balanced pair satisfying (\ref{eqn-edim}), we can easily check if it is a counterexample to Proposition \ref{prop-irrational}.
\begin{lemma}\label{lem-balancedCriterion}
Let $(C,t)= ((d;m^s,(m-1)^{r-s}),t)$ be a balanced pair satisfying (\ref{eqn-edim}), and let $$M = sm+(r-s)(m-1)=r\overline m$$ and $$\Delta = M^2-r(d^2-t^2).$$ Then the balanced pair is not a counterexample to Proposition \ref{prop-irrational} if  either 
\begin{itemize}
\item $\Delta <0$, or
\item $\Delta \geq 0$, and the number $$\mu_- = \frac{dM - t\sqrt{\Delta}}{d^2-t^2}$$ satisfies $\mu_- \geq \mu_0$.
\end{itemize}
\end{lemma}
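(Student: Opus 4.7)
The plan is to prove the contrapositive: if the balanced pair $(C,t)$ satisfies (\ref{eqn-edim}) and is a counterexample to Proposition \ref{prop-irrational}, so that (\ref{eqn-rationality}) holds for some $\mu \in (\sqrt{r},\mu_0)$, then $\Delta \geq 0$ and $\mu_- < \mu_0$. The first step is to recast (\ref{eqn-rationality}) as a squared inequality. Since (\ref{eqn-edim}) forces the linear series $|C|$ to have non-negative expected dimension, the class $C$ is effective; and since the $t=1$ case of Conjecture \ref{MainConjecture} is equivalent to the SHGH conjecture and therefore implies the Nagata conjecture for very general points, $L(\mu)$ is ample for every $\mu > \sqrt{r}$. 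Hence $L(\mu)\cdot C = \mu d - M > 0$, both sides of (\ref{eqn-rationality}) are non-negative, and squaring is an equivalence. This transforms (\ref{eqn-rationality}) into
$$P(\mu) := (d^2 - t^2)\mu^2 - 2dM\mu + (M^2 + t^2 r) \leq 0.$$

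Next I analyze $P$ as a quadratic in $\mu$. The test-pair condition $1\leq t < d$ together with $d\geq 2$ yields $d^2 - t^2 \geq 2d - 1 > 0$, so $P$ is a convex parabola. A direct calculation gives
$$(2dM)^2 - 4(d^2 - t^2)(M^2 + t^2 r) = 4t^2\bigl(M^2 - r(d^2 - t^2)\bigr) = 4t^2\Delta,$$
so when $\Delta < 0$ we have $P(\mu) > 0$ for every real $\mu$, contradicting $P(\mu)\leq 0$. Hence $\Delta \geq 0$, and the quadratic formula returns exactly the roots
$$\mu_\pm = \frac{dM \pm t\sqrt{\Delta}}{d^2 - t^2}$$
declared in the lemma; the inequality $P(\mu) \leq 0$ is equivalent to $\mu \in [\mu_-,\mu_+]$.

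To finish, the counterexample assumption gives $\mu \in (\sqrt{r},\mu_0)\cap [\mu_-,\mu_+]$, whence $\mu_- \leq \mu < \mu_0$ and thus $\mu_- < \mu_0$. Taking the contrapositive produces the lemma: if $\Delta < 0$, or if $\Delta \geq 0$ with $\mu_- \geq \mu_0$, then no admissible $\mu$ can exist and the balanced pair is not a counterexample. The computation itself is routine algebra; the only non-cosmetic point is justifying that squaring (\ref{eqn-rationality}) preserves the inequality, which relies on the positivity $\mu d - M > 0$ supplied by the Nagata/SHGH consequence of the standing Conjecture \ref{MainConjecture}.
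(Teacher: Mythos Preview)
Your proof is correct and follows essentially the same route as the paper: rewrite (\ref{eqn-rationality}) as the quadratic inequality $(d^2-t^2)\mu^2-2dM\mu+(M^2+t^2r)\le 0$, compute the discriminant $4t^2\Delta$, and read off the conclusion from the roots. The paper justifies squaring by the terse phrase ``both sides are positive since $C$ is effective,'' whereas you spell out that this positivity uses the nefness/ampleness of $L(\mu)$ via the Nagata consequence of the standing Conjecture~\ref{MainConjecture}; this is a welcome clarification but not a different argument.
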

\begin{proof}
Inequality (\ref{eqn-rationality}) reads $$d\mu-M \le t \sqrt{\mu^2 -r}.$$ Both sides of the inequality are positive since $C$ is effective, so squaring both sides and rearranging shows this is equivalent to \begin{equation}\label{eqn-equivalent} R(\mu):= (d^2-t^2)\mu^2-2dM \mu+(M^2+t^2r) \le 0.\end{equation}
Since $t< d$, the graph of $R(\mu)$ is an upward parabola.  The discriminant of the quadratic polynomial $R(\mu)$ is $4t^2\Delta$.  Therefore inequality (\ref{eqn-equivalent}) is false for $\mu < \mu_0$ if either $R(\mu) = 0$ has no real roots (and $\Delta < 0$), or if the smaller root (which is $\mu_-$) is at least $\mu_0$.
\end{proof}

\subsection{Critical pairs} We make one further reduction to further limit the search space for counterexamples to Proposition \ref{prop-irrational}.  Let $(C,t)$ be a balanced pair satisfying (\ref{eqn-rationality}) and (\ref{eqn-edim}).  If we can increase the smallest multiplicity $m_r$ by $1$ without making (\ref{eqn-edim}) false, then inequality (\ref{eqn-rationality}) still holds.  Similarly, if $t<d-1$ and we can increase $t$ by $1$ without making (\ref{eqn-edim}) false, then again inequality (\ref{eqn-rationality}) still holds.  We call a balanced pair $(C,t)$ a \emph{critical pair} if (\ref{eqn-edim}) is true but: \begin{itemize}
\item increasing $m_r$ by 1 makes (\ref{eqn-edim}) false, and 
\item either $t=d-1$, or increasing $t$ by 1 makes (\ref{eqn-edim}) false.
\end{itemize}
Thus, if Proposition \ref{prop-irrational} is false, then there is a counterexample $(C,t)$ which is a critical pair.

\begin{proposition}
Proposition \ref{prop-irrational} is true for $10\leq r \leq 19$.
\end{proposition}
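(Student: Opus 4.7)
The plan is a finite verification, leveraging all the reductions built up in the preceding three subsections. By those reductions, any hypothetical counterexample to Proposition \ref{prop-irrational} at a given $r$ may be replaced by a critical pair $(C,t)$ with $C = (d; m^s, (m-1)^{r-s})$ balanced. Lemma \ref{lem-multiplicityBound} caps $t$ (at most $5$ for $r=10$, dropping to $\{1,2\}$ for $r \geq 13$) and $\overline m$ (at most $25/(4r - 12\sqrt r)$). Since $m = \overline m + (r-s)/r \leq \overline m + 1$, this in turn bounds the top multiplicity $m$; for instance $m \leq 13$ when $r = 10$, shrinking to $m \leq 2$ by $r = 19$.

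For each tuple $(r, m, s, t)$ in these ranges, criticality pins $d$ down to a short list of integers. The condition that raising $m_r$ by one makes (\ref{eqn-edim}) fail forces
\[
\binom{d+2}{2} - \sum_{i=1}^r \binom{m_i+1}{2} \in \left(\max\bigl\{\textstyle\binom{t+1}{2}-2,\, 0\bigr\},\; \max\bigl\{\textstyle\binom{t+1}{2}-2,\, 0\bigr\} + (m_r + 1)\right],
\]
since raising $m_r$ by one increases $\sum_i\binom{m_i+1}{2}$ by exactly $m_r + 1$. When $t < d-1$, the second criticality condition similarly caps the left-hand side by $\max\{\binom{t+2}{2} - 2,\, 0\}$, shrinking the window further. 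Either way, only finitely many $d$ are compatible, and the enumeration of critical pairs is explicit.

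For each admissible tuple $(r, m, s, t, d)$ I then invoke Lemma \ref{lem-balancedCriterion}: compute $M = sm + (r-s)(m-1)$ and $\Delta = M^2 - r(d^2 - t^2)$, and verify that either $\Delta < 0$ or $\mu_- = (dM - t\sqrt{\Delta})/(d^2 - t^2) \geq \mu_0$, with $\mu_0$ as in Theorem \ref{thm-Irrational} for the relevant $r$. This last step is routine arithmetic and is best carried out by computer; the total number of tuples to check across $r \in \{10, \ldots, 19\}$ is modest and shrinks rapidly as $r$ grows.

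The real obstacle is organizational rather than mathematical: one must treat edge cases carefully (in particular $s = r$, where $m_r = m$ rather than $m-1$, so the interval above shifts accordingly), confirm that the enumeration is truly exhaustive, and double-check the critical pairs that lie near the boundary of effectivity (where $\Delta$ is small and $\mu_-$ is close to $\mu_0$). Once the finite search space is accurately described, the verification is a mechanical check with no further ideas required.
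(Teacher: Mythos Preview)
Your proposal is correct and follows essentially the same approach as the paper: reduce to a finite list of critical pairs via the bounds of Lemma \ref{lem-multiplicityBound}, then dispatch each one by computer using Lemma \ref{lem-balancedCriterion}. The only difference is organizational: the paper observes that for each pair $(d,t)$ there is at most one critical pair $((d;m^s,(m-1)^{r-s}),t)$ and so enumerates over $(d,t)$, whereas you enumerate over $(m,s,t)$ and solve for $d$; these are equivalent parameterizations of the same finite search.
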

\begin{proof}
Fix some $r$ with $10\leq r \leq 19$.  Given integers $d\geq 1$ and $t \geq 1$, there is at most one critical pair $$((d;m^s,(m-1)^{r-s}),t).$$  Since Lemma \ref{lem-multiplicityBound} bounds $t$ and the average multiplicity $\overline m = \frac{1}{r} \sum m_i$ of any counterexample to Proposition \ref{prop-irrational}, there are only finitely many critical pairs which are potentially counterexamples.  We programmed a computer to list them all. For each  pair, Lemma \ref{lem-balancedCriterion} shows that the pair is not a counterexample to Proposition \ref{prop-irrational}. 
\end{proof}

We give more detail in the case $r=12$.

\begin{example} Let $r=12$.
In Table \ref{table-12}, we list all the critical pairs $((d;m^s,(m-1)^{12-s}),t)$ which are consistent with Lemma \ref{lem-multiplicityBound}.  According to the lemma, $t\in \{1,2,3\}$ and the total  multiplicity $M$ is bounded by $46$.  For each $t$, we increase $d$ and list any corresponding critical pair until $M$ would exceed this bound.  In the notation of Lemma \ref{lem-balancedCriterion} we then compute the number $\Delta$, and if $\Delta \geq 0$ we compute $\mu_-$.  By Lemma \ref{lem-balancedCriterion}, if $\Delta < 0$ or if $\Delta \geq 0$ and $\mu_- \geq \mu_0$ then the critical pair is not a counterexample.  In each case where $\Delta \geq 0$, we observe $\mu_- = 4 > \sqrt{13} = \mu_0$.
This proves Proposition \ref{prop-irrational} for $r=12$.
\begin{table}\caption{Critical pairs for $r=12$.}\label{table-12}
\begin{tabular}{ccccccccccc}
$C$ & $t$ & $M$ & $\Delta$ &$\mu_-$&\qquad\qquad&$C$ & $t$ & $M$ & $\Delta$ &$\mu_-$ \\\hline
$(2;1^5)$ &  1 & 5 & $-11$ &&&$(11;4^1,3^{11})$ &2 & 37 & $-35$ &\\
$(3;1^9)$ &  1 & 9 & $-15$ &&&$(12; 4^4,3^8)$ & 2 & 40 & $-80$ &\\
$(4;2^1,1^{11})$ & 1 & 13  & $-11$ &&&$(4; 1^{10})$ &3 & 10 & 16 &4\\
$(5;2^4,1^8)$ & 1  & 16 & $-32 $&&&$(5;2^2,1^{10})$ & 3 & 14 &4 &4 \\
$(9;3^6,2^6)$ & 1  & 30 & $-60$ &&&$(6;2^5,1^7)$ &3 & 17 & $-35$& \\
$(13; 4^8,3^4)$ & 1 & 44 & $-80$ &&&$(7;2^9,1^3)$ &3 & 21 & $-39$&  \\
$(3;1^8)$ & 2 & 8 & 4 &4&&$(8;3^1,2^{11})$ &3 & 25 & $-35$ &\\
$(4;1^{12})$ & 2 & 12 & 0 &4&&$(9;3^4,2^8)$ &3 & 28 & $-80$ &\\
$(5;2^3,1^9)$ & 2 & 15 &$-27$ &&&$(10;3^8,2^4)$ &3 & 32 & $-68$ &\\
$(6;2^7,1^5)$ & 2 & 19 & $-23$ &&&$(11;3^{12})$ &3 & 36 & $-48$&\\
$(7;2^{11},1^1)$ & 2 & 23 & $-11$&&&$(12;4^3,3^9)$ &3 & 39 & $-99$ &\\
$(8;3^2,2^{10})$ & 2 & 26 & $-44$&&&$(13;4^7,3^5)$ &3 & 43 & $-71$ &\\
$(9;3^5,2^7)$ & 2 & 29 & $-83$&&&$(14;4^{10},3^2)$ & 3 & 46 & $-128$ & \\
$(10;3^9,2^3)$ & 2 & 33 &$-63$ &
\end{tabular}
\end{table}
\end{example}
On the other hand, once $r\geq 20$ we can give an argument that requires minimal computation.

\begin{proposition}
Proposition \ref{prop-irrational} is true for $r\geq 20$.
\end{proposition}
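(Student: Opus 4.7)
My plan is to reduce any potential counterexample to a short list of critical balanced pairs and dispatch them via the discriminant criterion of Lemma \ref{lem-balancedCriterion}. I would first invoke Lemma \ref{lem-multiplicityBound} to restrict attention to $t\in\{1,2\}$ and $\overline m \leq \tfrac{25}{4r-12\sqrt r}$. A direct calculation (setting $y=\sqrt r$, the quadratic $4y^2-12y-25$ has positive root $(3+\sqrt{34})/2\approx 4.415$, and $\sqrt{20}>4.415$) shows that this bound is strictly less than $1$ for all $r\geq 20$. Hence in any balanced critical pair $(C,t)=((d;m^s,(m-1)^{r-s}),t)$ we are forced to have $m=1$, and the multiplicities lie in $\{0,1\}$ with $0<s<r$.

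Next I would exploit the criticality condition at $m_r$. Since at least one $m_i=0$, increasing $m_r$ from $0$ to $1$ changes $\sum_i\binom{m_i+1}{2}$ by exactly $1$. Criticality together with (\ref{eqn-edim}) then pins down
\[
\binom{d+2}{2}-s \;=\; \max\!\left\{\binom{t+1}{2}-2,\,0\right\}+1,
\]
yielding $s=d(d+3)/2$ when $t=1$ and $s=(d^2+3d-2)/2$ when $t=2$. (One also checks that the $t$-criticality condition is then automatic.)

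Finally, I would apply Lemma \ref{lem-balancedCriterion}: since $M=s$, it suffices to show $\Delta=s^2-r(d^2-t^2)<0$. Because $s$ is a sum of $r$ values in $\{0,1\}$ we have $s\leq r$ automatically, so it is enough to verify $d^2-t^2>s$. For $t=1$, substituting $s=d(d+3)/2$ reduces this to $d^2-3d-2>0$, which holds for $d\geq 4$; the exceptional values $d\in\{2,3\}$ give $\Delta=25-3r$ and $\Delta=81-8r$, both negative as soon as $r\geq 11$. For $t=2$, substituting $s=(d^2+3d-2)/2$ reduces the inequality to $d^2-3d-6>0$, which holds for $d\geq 5$; the remaining cases $d\in\{3,4\}$ give $\Delta=64-5r$ and $\Delta=169-12r$, both negative once $r\geq 15$. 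Since $r\geq 20$ in every case, no critical balanced pair is a counterexample.

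The main obstacle I anticipate is keeping the criticality bookkeeping tidy; the whole argument hinges on being able to assert $m=1$, which is precisely what $\overline m<1$ buys us once $r\geq 20$. After this single structural reduction the possible critical pairs form a one-parameter family in $d$ for each $t\in\{1,2\}$, and the discriminant computation is elementary.
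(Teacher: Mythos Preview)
Your proof is correct and follows essentially the same approach as the paper: both use Lemma~\ref{lem-multiplicityBound} to force $t\in\{1,2\}$ and $m=1$ when $r\geq 20$, then use $m_r$-criticality to pin down $M=\binom{d+2}{2}-t$ (your case-split formulas), and finish by showing $\Delta<0$ via Lemma~\ref{lem-balancedCriterion}. The only cosmetic difference is that the paper treats $t=1,2$ uniformly and uses the estimate $\tfrac{d^2-t^2}{M}>\tfrac{2(d-2)}{d+1}\geq 1>\tfrac{M}{r}$ for $d\geq 5$, whereas you split by $t$ and solve the resulting quadratics in $d$ directly; the small-$d$ cases checked by hand coincide.
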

\begin{proof}
Suppose a critical pair $(C,t) = ((d;m^s,(m-1)^{r-s}),t)$ violates
Proposition \ref{prop-irrational}.  Then Lemma
\ref{lem-multiplicityBound} shows $t\in \{1,2\}$ and $\overline m <
1$.  For the last inequality, we use the hypothesis $r \ge 20$.
Therefore $m=1$ and $M = s<r$.

Note that the inequality \eqref{eqn-edim} must be as sharp as possible for
$((d;1^s,0^{r-s}),t)$; in other words, we have an equality
$${d+2\choose 2} - M = \max\left\{
{t+1\choose 2}-2,0\right\}+1.$$ Indeed, if this fails then the
inequality \eqref{eqn-edim} is also satisfied by
$((d;1^{s+1},0^{r-s-1}),t)$, which contradicts the hypothesis that
$((d;1^s,0^{r-s}),t)$ is critical.

Since $t \in \{1,2\}$, it follows that $$ M = \frac{(d+2)(d+1)}{2} -t.$$  But then we claim that \begin{align*} \Delta: = M^2 - r(d^2-t^2) < 0,\end{align*} so that the pair is not a counterexample by Lemma \ref{lem-balancedCriterion}.  If $d <5$ then the only critical pairs are $((2;1^5),1)$, $((3;1^9),1)$, $((4;1^{14}),1)$, $((3;1^8),2)$, and $((4;1^{13}),2)$, and the inequality holds in these cases since $r\geq 20$.   So, assume $d\geq 5$.

Now since $t\in \{1,2\}$ and $d\geq 5$, $$\frac{d^2-t^2}{M} >\frac{2(d^2 - 4)}{(d+2)(d+1)} = \frac{2(d-2)}{(d+1)} \geq 1 > \frac{M}{r},$$ and therefore $M^2-r(d^2-t^2)<0$.
\end{proof}

\begin{example}\label{irrational-epsilon}
Let $r=10$ and let $L = 16H-5E$. Then $L$ is ample by \cite{Eckl}, see also \cite[Theorem
2.18]{H}. After normalizing, we have $\mu=3.2$.
Suppose that Conjecture \ref{MainConjecture} is true. Since $\mu  < 77/24\approx 3.208$, by Theorem
\ref{thm-Irrational}, there are no weakly submaximal curves for $L(\mu)$. Since
$\sqrt{L(\mu)^2} = \sqrt{0.24} \notin \QQ$, it follows that $\varepsilon(L(\mu))
\notin \QQ$. Hence $\varepsilon(L) \notin \QQ$.
\end{example}

\bibliographystyle{plain}

\end{document}